\numberwithin{equation}{section}
\numberwithin{subsection}{section}
\newtheorem*{namedtheorem}{\theoremname}
\newcommand{\theoremname}{testing}
\newtheorem{theorem}[subsection]{Theorem}
\newtheorem{proposition}[subsection]{Proposition}
\newtheorem{proposition-definition}[subsection]
{Proposition-Definition}
\newtheorem{corollary}[subsection]{Corollary}
\newtheorem{lemma}[subsection]{Lemma}
\theoremstyle{definition}
\newtheorem{definition}[subsection]{Definition}
\newtheorem{examples}[subsection]{Examples}
\newtheorem{remark}[subsection]{Remark}
\theoremstyle{remark}
\newcommand\cC{\mathcal{C}}
\newcommand\cG{\mathcal{G}}
\newcommand\cM{\mathcal{M}}
\newcommand\cN{\mathcal{N}}
\newcommand\cO{\mathcal{O}}
\newcommand\cX{\mathcal{X}}
\newcommand\ocM{\overline{\mathcal{M}}}
\newcommand\ocX{\overline{\mathcal{X}}}
\newcommand\DD{\mathbb{D}}
\newcommand\FF{\mathbb{F}}
\newcommand\GG{\mathbb{G}}
\newcommand\NN{\mathbb{N}}
\newcommand\PP{\mathbb{P}}
\newcommand\QQ{\mathbb{Q}}
\newcommand\VV{\mathbb{V}}
\newcommand\ZZ{\mathbb{Z}}
\newcommand\fG{\mathfrak{G}}
\newcommand\fM{\mathfrak{M}}
\newcommand\fS{\mathfrak{S}}
\renewcommand\frm{\mathfrak{m}}
\newcommand\sG{\mathscr{G}}
\newcommand\sN{\mathscr{N}}
\newcommand\arr{\ifinner\to\else\longrightarrow\fi}
\def\displaytimes_#1{\mathrel{\mathop{\times}\limits_{#1}}}
\def\displayotimes_#1{\mathrel{\mathop{\bigotimes}\limits_{#1}}}
\newcommand\Aut{\operatorname{Aut}}
\newcommand\pic{\operatorname{Pic}}
\newcommand\Spec{\operatorname{Spec}}
\newcommand\doublelong[2]{\mathbin{\xymatrix{{}\ar@<3pt>[r]^{#1}
\ar@<-3pt>[r]_{#2}&}}}
\newlength{\ignora}
\newcommand{\sm}{{\mathrm{sm}}}
\newcommand{\sh}{{\mathrm{sh}}}
\newcommand{\Dan}[1]{}
\newcommand{\Matthieu}[1]{}
\renewcommand{\setminus}{\smallsetminus}
\theoremstyle{plain}
\theoremstyle{definition}
\newcommand{\Hom}{\underline {\text{\rm Hom}}}
\numberwithin{equation}{subsection}
\def\bin#1#2{\mbox{\scriptsize $\big\{\!\!\!\begin{array}{c} #1 \\ #2
    \end{array}\!\!\!\big\}$\normalsize}}
\begin{document}

\title{Moduli of Galois $p$-covers in mixed characteristics}

\author[Abramovich]{Dan Abramovich}

\author[Romagny]{Matthieu Romagny}

\address[Abramovich]{Department of Mathematics\\
Brown University\\
Box 1917\\
Providence, RI 02912\\
U.S.A.}
\email{abrmovic@math.brown.edu}

\address[Romagny]{Institut de Math\'ematiques\\
Universit{\'e} Pierre et Marie Curie (Paris 6)\\
Case 82\\
4 place Jussieu\\
F-75252 Paris Cedex 05.}
\email{romagny@math.jussieu.fr}

\thanks{Abramovich supported in part by NSF grants  DMS-0603284 and DMS-0901278.}
\date{\today}

\maketitle

\tableofcontents

\section{Introduction}

Fix a prime number $p$.
The aim of this paper is to define a complete moduli stack of
degree-$p$ covers $Y \to \cX$, with $Y$ a stable curve which is a
 $\cG$-torsor over $\cX$, for a suitable group scheme
$\cG/\cX$. The curve $\cX$ is a twisted curve in the sense of
\cite{A-V,AOV2} but in general not stable. This follows the same general approach as the characteristic-0 paper \cite{ACV}, but diverges from that of \cite{AOV2}, where the curve $\cX$ is stable,
the group scheme $\cG$ is assumed linearly reductive, 
but $Y$ is in general much more singular. 
\Dan{Say something about singularities of moduli, about lifting problems, etc.}

The approach is based on \cite[Proposition 1.2.1]{Raynaud} of Raynaud,
and the more general notion  of {\em effective model} of a
group-scheme action due to the second author
\cite{Romagny}. The general strategy was outlined in \cite{A-Lubin} in
a somewhat special case.

\subsection{Rigidified group schemes}

The group scheme $\cG$ comes with a supplementary structure which
we call a {\em generator}. Before we define this notion, let us
briefly recall from Katz-Mazur \cite[\S 1.8]{KM} the concept of a
{\em full set of sections}. Let
$Z\to S$ be a finite locally free morphism of schemes of degree $N$.
Then for all affine $S$-schemes $\Spec(R)$, the $R$-algebra
$\Gamma(Z_R,\cO_{Z_R})$ is locally free of rank $N$ and has a canonical
norm mapping. We say that a set of $N$ sections $x_1,\dots,x_N\in Z(S)$
is a {\em full set of sections} if and only if for any affine $S$-scheme
$\Spec(R)$ and any $f\in \Gamma(Z_R,\cO_{Z_R})$, the norm of $f$
is equal to the product $f(x_1)\dots f(x_N)$.

\begin{definition} \label{dfn_generator}
Let $G\to S$ be a finite locally free group scheme of order $p$.
A {\em generator} is a morphism of $S$-group schemes
$\gamma:(\ZZ/p\ZZ)_S\to G$ such that the sections $x_i=\gamma(i)$,
$0\le i\le p-1$, are a full set of sections.
A {\em rigidified group scheme} is a group scheme of degree $p$
with a generator.
\end{definition}

The notion of generator is easily described in terms of the Tate-Oort
classification of group schemes of order $p$. This is explained
and complemented in appendix~\ref{complements_order_p}.

\begin{remark} \label{closure}
One can define the stack of rigidified group schemes a bit more directly:
consider the Artin stack $\fG\fS_p$ of group-schemes of degree $p$, and
let $\cG^u \to\fG\fS_p$ be the universal group-scheme - an object of $\cG^u$
over a scheme $S$ consists of a group-scheme $\cG \to S$ with a section
$S \to \cG$. It has a unique non-zero point over $\QQ$ corresponding to
$\ZZ/p\ZZ$ with the section $1$. The stack of rigidified group schemes
is canonically isomorphic to the closure of this point.

Of course describing a stack as a closure of a sub-stack is not ideal
from the moduli point of view, and we find the definition using
Katz--Mazur generators  more satisfying.
\end{remark}

\subsection{Stable $p$-torsors}

Fix a prime number $p$ and integers $g,h,n\ge 0$ with $2g-2+n>0$.

\begin{definition} \label{Def:stable-torsor}
A {\em stable $n$-marked $p$-torsor} of genus $g$ (over some base scheme $S$)
is a triple
$$(\cX,\cG,Y)$$
where
\begin{enumerate}
\item $(\cX,\{\Sigma_i\}_{i=1}^n)$ is an $n$-marked twisted curve
of genus $h$,
\item $(Y,\{P_i\}_{i=1}^n)$ is a nodal curve of genus $g$ with \'etale
marking divisors $P_i\to S$, which is stable in the sense of
Deligne-Mumford-Knudsen,
\item $\cG \to \cX$ is a rigidified group-scheme of degree $p$,
\item $Y \to \cX$ is a $\cG$-torsor and $P_i=\Sigma_i\times_\cX Y$ for all $i$.
\end{enumerate}
\end{definition}

Note that as usual the markings $\Sigma_i$ (resp. $P_i$) are required to
lie in the smooth locus of $\cX$ (resp. $Y$). They split into two groups.
In the first group $\Sigma_i$ is twisted and $[P_i:S]=1$, while in the second
group $\Sigma_i$ is a section and $[P_i:S]=p$. The number $m$ of
twisted markings is determined by $(2g-2)=p(2h-2)+m(p-1)$ and it is
equivalent to fix $h$ or $m$.

The notion of stable marked $p$-torsor makes sense over an
arbitrary base scheme $S$. Given stable $n$-marked $p$-torsors $(\cX, \cG, Y)$
over $S$ and $(\cX', \cG', Y')$ over $S'$, one defines as usual a morphism
$(\cX, \cG, Y) \to (\cX', \cG', Y')$ over $S\to S'$ as a fiber
diagram.  This defines a category fibered over $\Spec \ZZ$ that we
denote $ST_{p,g,h,n}$.

Our main result is:

\begin{theorem} \label{maintheorem}
The category $ST_{p,g,h,n}/\Spec \ZZ$ is a proper Deligne-Mumford
stack with finite diagonal.
\end{theorem}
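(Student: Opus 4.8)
The plan is to build $ST_{p,g,h,n}$ out of moduli stacks that are already understood, to deduce algebraicity and the Deligne--Mumford property from a single representability statement, and then to obtain properness and finiteness of the diagonal from the valuative criterion, the extension theory of \cite{Romagny} providing the crucial input.

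First I would note that $ST_{p,g,h,n}$ is a stack for the fppf topology by descent for twisted curves, group schemes, torsors and stable curves, and then produce the forgetful morphism $F\colon ST_{p,g,h,n}\to\ocM_{g,N}$ sending a stable $p$-torsor $(\cX,\cG,Y)$ to the stable curve $Y$ together with its marking divisors $\{P_i\}$ (here $N=\sum_i[P_i:S]$, and after an \'etale cover ordering the points of the $P_i$ one lands in the usual $\ocM_{g,N}$). The key observation is that $F$ is \emph{faithful}: an automorphism of a triple inducing the identity on $Y$ must induce the identity on $\cX=Y/\cG$, since $Y\to\cX$ is an fppf epimorphism, and the identity on $\cG$, since the $\cG$-action on $Y$ is free. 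A faithful morphism of stacks in groupoids is representable by algebraic spaces, so $ST_{p,g,h,n}$ is an algebraic stack; and since it maps representably to the Deligne--Mumford stack $\ocM_{g,N}$, it is itself Deligne--Mumford. In particular every automorphism group of a triple embeds into $\aut(Y,\{P_i\})$ and is finite. Finite type over $\Spec\ZZ$ follows from boundedness: $Y$ ranges in the finite-type stack $\ocM_{g,N}$, the rigidified group schemes $\cG$ of degree $p$ form a finite-type stack (Remark~\ref{closure}), and the $\cG$-torsors form a finite-type stack over the pair $(\cX,\cG)$.

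For properness I would verify the valuative criterion over a discrete valuation ring $R$ with fraction field $K$, allowing a finite extension of $K$ as is permitted for Deligne--Mumford stacks. Given $\xi_K=(\cX_K,\cG_K,Y_K)$, properness of $\ocM_{g,N}$ first extends $Y_K$ to a stable curve $Y_R/R$ after a finite base change. The heart of the argument is to extend the group scheme $\cG_K$ and the torsor structure across the special fibre: the generic action of the degree-$p$ group scheme on $Y_R$ need not extend to an action of a flat group scheme, and in mixed characteristic the special fibre of $\cG$ may degenerate (from $(\ZZ/p\ZZ)$ towards $\mmu_p$ or $\alpha_p$). Here I would invoke Raynaud's \cite[Proposition 1.2.1]{Raynaud} and, in the needed generality, the theory of \emph{effective models} of \cite{Romagny}: this furnishes a canonical finite flat group scheme $\cG$ acting on $Y_R$ so that $Y_R\to\cX:=Y_R/\cG$ is again a torsor. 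One then checks \emph{locally at the nodes and markings} that $\cX$ is a twisted curve in the sense of Definition~\ref{Def:stable-torsor}, that the full set of sections realizing the generator of Definition~\ref{dfn_generator} extends, and that the marking divisors $P_i=\Sigma_i\times_\cX Y$ behave as required. This local analysis of $\cG$-torsors over a node is the main obstacle and the technical core of the paper.

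Finally, uniqueness in the valuative criterion gives both separatedness and properness of the diagonal. Indeed, two extensions of $\xi_K$ over $R$ have isomorphic $Y_R$ by separatedness of $\ocM_{g,N}$, and the effective model together with its generator is unique by \cite{Romagny}, so the whole triple is determined and isomorphisms extend uniquely. Combined with the finiteness of automorphism groups established above, the sheaf $\isom(\xi,\xi')$ is quasi-finite and proper, hence finite, so $ST_{p,g,h,n}$ has finite diagonal. Being of finite type and universally closed over $\Spec\ZZ$ with finite diagonal, it is a proper Deligne--Mumford stack with finite diagonal.
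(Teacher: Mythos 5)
Your algebraicity argument contains a genuine gap, and it is the decisive one. You write that ``a faithful morphism of stacks in groupoids is representable by algebraic spaces,'' and you use this to conclude that $ST_{p,g,h,n}$ is algebraic because the forgetful functor $F$ to $\ocM_{g,N}$ is faithful. But the equivalence ``faithful $\Leftrightarrow$ representable by algebraic spaces'' is a statement about morphisms \emph{between algebraic stacks}; when the source is only known to be a stack, faithfulness of $F$ tells you merely that for any scheme $T\to\ocM_{g,N}$ the fiber product $ST_{p,g,h,n}\times_{\ocM_{g,N}}T$ is fibered in setoids, i.e.\ equivalent to a \emph{sheaf} --- not that this sheaf is an algebraic space. (Any non-algebraic fppf sheaf over $\Spec\ZZ$ maps faithfully to the algebraic stack $\Spec\ZZ$.) So your argument is circular: algebraicity of these fibers is exactly what must be proved. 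This is what Section~\ref{Sec:algebricity} of the paper actually does: it realizes the fiber data via a tower of finite-type Hilbert and $Hom$ constructions --- $Hom$ stacks for $Y\to\cX$ using \cite[Corollary C.4]{AOV2}, $Hom$ schemes for the multiplication, inverse and descent data on $Y_2=Y\times_\cX Y$, with the group-scheme axioms and the action condition cut out as \emph{closed} conditions via Weil restriction (Corollary~\ref{equalizer}), and the generator condition closed by \cite[Corollary 1.3.5]{KM}. Your finite-type claim has a second, smaller hole: you never bound the twisted curves $\cX$; the paper needs Lemma~\ref{index_bounded} (indices divide $p$) together with \cite[Corollary A.8]{AOV2} for this, and Remark~\ref{closure} is no substitute.

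On properness your skeleton agrees with the paper's (extend $Y$ by properness of $\ocM_{g,\delta}$, then invoke \cite[Proposition 1.2.1]{Raynaud} and the effective models of \cite{Romagny}), but you defer exactly the content. In particular, $\cX$ is \emph{not} obtained in one stroke as $Y_R/\cG$ with the torsor property preserved: the paper builds the coarse curve componentwise (quotients where the $\ZZ/p\ZZ$-action is nontrivial, relative Frobenius $Y_i\to Y_i^{(p)}$ where it is trivial in characteristic $p$), introduces root-stack structure at markings and generic nodes, lifts $Y\to\ocX$ by a purity lemma, extends the group scheme by taking closures of $\cG_\eta$ in $\Aut$ stacks --- at generic points via \cite[Theorem 4.3.5]{Romagny}, over the smooth locus and the branches at generic nodes via \cite[Propositions 2.2.2 and 2.2.3]{A-Lubin} --- and only then introduces the twisted structure at \emph{special} nodes, determined by local degrees, with uniqueness resting on Proposition~\ref{Prop:twisting-proper}. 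Writing that this local analysis ``is the main obstacle and the technical core of the paper'' concedes that the proposal does not carry it out; as it stands, neither algebraicity nor the valuative criterion is established.
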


Notice that $ST_{p,g,h,n}$ contains an open substack of {\'e}tale
$\ZZ/p\ZZ$-covers. Identifying the closure of this open locus remains
an interesting question.

\subsection{Organization} Section \ref{Sec:algebricity} is devoted to Proposition \ref{algebraicity}, in particular showing the algebricity of $ST_{p,g,h,n}$. Section \ref{Sec:properness} completes the proof of Theorem \ref{maintheorem} by showing properness. We give simple examples in Section \ref{Sec:examples}. Two appendices are provided - in Appendix \ref{complements_order_p} we discuss embeddings of group schemes of order $p$ into smooth group schemes. In Appendix \ref{Sec:Weil} we recall some facts about the Weil restriction of closed subschemes, and state the representability result in a form useful for us.

\subsection{Acknowledgements} We thank Sylvain Maugeais for helping us clarify a point in this paper.

\section{The stack $ST_{p,g,h,n}$} \label{Sec:algebricity}

In this section, we review some basic facts on twisted curves and then we
show:

\begin{proposition} \label{algebraicity}
The category $ST_{p,g,h,n}/\Spec \ZZ$ is an algebraic
stack of finite type over $\ZZ$.
\end{proposition}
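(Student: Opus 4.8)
The plan is to exhibit $ST_{p,g,h,n}$ as a locally closed substack of a relative Hom-stack over the stack of twisted curves, and to read off algebraicity and finite type from the algebraicity of the pieces together with the general algebraicity theorem for Hom-stacks. First I would recall that the stack $\mathfrak{Tw}_{h,n}$ of $n$-marked genus-$h$ twisted curves is an algebraic stack, locally of finite type over $\ZZ$ (this is \cite{A-V} and \cite{AOV2}; the twisted markings are modelled on $B\bmu_r$, which is a tame Artin stack even when $p\mid r$). Write $\cC\to\mathfrak{Tw}_{h,n}$ for the universal twisted curve; it is proper, flat, finitely presented and tame. Next, by Remark~\ref{closure} and the Tate--Oort analysis of Appendix~\ref{complements_order_p}, the stack $\mathcal{RG}_p$ of rigidified group schemes of degree $p$ is algebraic and of finite type over $\ZZ$, with finite diagonal. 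Over it I would form the universal classifying stack $\mathcal{B}\to\mathcal{RG}_p$ whose objects over $T$ are pairs $(\cG,P)$ with $\cG/T$ a rigidified group scheme and $P$ a $\cG$-torsor; thus $\mathcal{B}=B\cG^u$ relative to $\mathcal{RG}_p$. As $\cG^u$ is finite flat, $\mathcal{B}$ is algebraic, of finite type over $\ZZ$, with affine (in fact finite) diagonal.

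The mechanism is $2$-Yoneda: a morphism $\cX\to\mathcal{B}$ over a base $S$ is exactly a rigidified group scheme $\cG$ on $\cX$ together with a $\cG$-torsor $Y\to\cX$. Hence the groupoid of triples $(\cX,\cG,Y)$ satisfying conditions (1), (3), (4) of Definition~\ref{Def:stable-torsor} — with $Y$ only asked to be a $\cG$-torsor, not yet a stable curve — is the relative Hom-stack
\[
\mathcal{H}\;:=\;\underhom_{\mathfrak{Tw}_{h,n}}\bigl(\cC,\ \mathcal{B}\times_{\ZZ}\mathfrak{Tw}_{h,n}\bigr).
\]
Its algebraicity is the crux of the argument. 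The source $\cC$ is proper, flat and tame, while the target $\mathcal{B}$ is finitely presented and algebraic with affine diagonal, so $\mathcal{H}$ is algebraic and locally of finite presentation over $\mathfrak{Tw}_{h,n}$, hence over $\ZZ$, by the general algebraicity theorem for Hom-stacks (Olsson, Aoki, Hall--Rydh). I expect this to be the main obstacle, and the reason the statement is delicate in mixed characteristic: the stabilizers of $\mathcal{B}$ are the order-$p$ group schemes $\cG$, which are \emph{not} linearly reductive in characteristic $p$ (for instance $\ZZ/p\ZZ$ or $\alpha_p$), so no tame or Deligne--Mumford target is available; it is precisely the tameness of the twisted-curve source that keeps cohomology and base change under control and the Hom-stack algebraic. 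This is the technical point separating the present setting from the linearly reductive case of \cite{AOV2}.

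It remains to impose on $\mathcal{H}$ the outstanding requirements of Definition~\ref{Def:stable-torsor}. Because $\cG\to\cX$ is finite flat of degree $p$, the torsor $Y\to\cX$ is finite flat of degree $p$, hence affine and representable; thus $Y$ is a proper tame Artin stack, finite over $\cX$ and flat over $S$. I would then carve out in turn the loci where $Y$ is representable over $S$ (an open condition, being where the finite relative inertia $I_{Y/S}$ is trivial), where the geometric fibres of $Y$ are nodal of arithmetic genus $g$ (open, the genus being locally constant), where the induced divisors $P_i=\Sigma_i\times_\cX Y$ are \'etale and lie in the smooth locus, and where $Y$ is Deligne--Mumford--Knudsen stable. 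Each of these is open, or at worst locally closed, in the finitely presented family $\mathcal{H}$, so $ST_{p,g,h,n}$ is an algebraic substack of $\mathcal{H}$.

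Finally, for finite type I would use the forgetful morphism $(\cX,\cG,Y)\mapsto(Y,\{P_i\})$ to the stack $\overline{\mathcal{M}}_{g}$ of stable genus-$g$ curves (with their \'etale marking divisors), which is of finite type over $\ZZ$. Over a fixed stable curve $Y$ one has $\cX=[Y/\cG]$, so the remaining data amount to an order-$p$ rigidified group scheme acting on $Y$ with twisted-curve quotient; these are homomorphisms from an order-$p$ group scheme into the \emph{finite} automorphism group scheme $\underaut(Y)$ of the stable curve $Y$, a finite-type family. Thus the forgetful morphism is of finite type, and since $\overline{\mathcal{M}}_{g}$ is of finite type over $\ZZ$, so is $ST_{p,g,h,n}$.
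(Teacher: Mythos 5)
Your algebraicity argument is a genuinely different route from the paper's, and it can be made to work, but only if you are careful about which Hom-stack theorem you invoke. The paper proceeds by hand: it first bounds both curves, forming $\ocM_{g,\delta}\times\fM'$, then builds $Y\to\cX$ via Hilbert schemes and \cite[Corollary C.4]{AOV2}, imposes the group-scheme axioms on a descent of $Y\times_\cX Y$ as \emph{closed} conditions via Weil restriction (Corollary \ref{equalizer}), and closes the generator condition by \cite[Corollary 1.3.5]{KM}. You replace all of this by one application of an algebraicity theorem for $\underhom_{\mathfrak{Tw}_{h,n}}(\cC,\mathcal{B}\times\mathfrak{Tw}_{h,n})$. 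Beware, however, that your target does \emph{not} have finite diagonal: the rigidified group scheme $\alpha_p$ (its zero section $\ZZ/p\ZZ\to\alpha_p$ is a full set of sections, hence a generator) has automorphism group scheme $\GG_m$. Consequently Olsson's Hom-stack theorem, which requires finite diagonal of the target, does not apply, and tameness of the source is not the decisive point you make it out to be; what saves the argument is the Hall--Rydh theorem, whose hypotheses (target locally of finite presentation with affine stabilizers and quasi-affine diagonal) are met because $\mathcal{B}$ has affine diagonal. Your parenthetical ``in fact finite'' is false, though harmless since only affineness is used. Note also that this gives only ``locally of finite presentation'': $\mathfrak{Tw}_{h,n}$ has infinitely many components, indexed by the twisting data.

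The genuine gap is in your finite-type argument, in two places. First, you never bound the twisting: a priori the image of $ST_{p,g,h,n}$ in $\mathfrak{Tw}_{h,n}$ meets infinitely many components, and ruling this out is precisely the paper's Lemma \ref{index_bounded} (the index of every point of $\cX$ divides $p$) together with bounding the topological type of $\cX$ by that of $Y$; nothing in your setup substitutes for this. Second, and more seriously, your description of the fibers of $(\cX,\cG,Y)\mapsto (Y,\{P_i\})$ is structurally wrong: $\cG$ is a group scheme over $\cX$, not over the base $S$, so the torsor structure is a map $\cG\times_\cX Y\to Y$ and there is no homomorphism from ``an order-$p$ group scheme'' to $\underaut(Y)$. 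Indeed, by Deligne--Mumford the automorphism group scheme of a stable curve is finite and unramified, so in characteristic $p$ it admits no nontrivial homomorphism from $\mu_p$ or $\alpha_p$; your description would therefore exclude exactly the objects this moduli problem is designed to capture, namely those where $\cG$ is generically $\mu_p$ or {\'e}tale and degenerates to $\alpha_p$ over twisted points, as in the paper's examples. Quasi-compactness has to be obtained essentially as the paper does it: bound both curves first, and then realize the remaining data --- the degree-$p$ map $Y\to\cX$, the group structure on a descent of $Y\times_\cX Y$, and the generator --- inside Hom-stacks and Hilbert schemes that are of finite type because $Y\times_\cX Y\to Y$ and $Y\to\cX$ are finite.
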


\subsection{Twisted curves and Log twisted curves}
We review some material from Olsson's treatment in \cite[Appendix A]{AOV2},
with some attention to properness of the procedure of ``log twisting''.

Recall that a {\em twisted curve} over a scheme $S$ is a tame Artin
stack $\cC \to S$ with a collection of gerbes $\Sigma_i \subset \cC$
satisfying the following conditions: 

\begin{enumerate}
\item The coarse moduli space $C$ of $\cC$ is a prestable curve over
  $S$, and the images $\bar\Sigma_i$ of $\Sigma_i$ in $C$ are the
  images of disjoint sections 
  $\sigma_i: S   \to C$ of $C \to S$ landing in the smooth locus.
\item \'Etale locally on $S$ there are positive integers $r_i$ such that, on a
  neighborhood  of $\Sigma_i$ we can identify $\cC$ with the root stack
  $C(\sqrt[r_i]{\bar\Sigma_i})$. 
\item Near a node $z$ of $C$  write $C^{sh} = \Spec
  (\cO_S^{sh}[x,y]/(xy-t))^{sh}$. Then there exists a positive integer
  $a_z$ and 
  an element $s \in \cO_S^{sh}$ such that $s^{a_z} = t$ and  
$$\cC^\sh = [\Spec \cO_S^{sh}[u,v]/(uv-s))^{sh} / \mu_{a_z}],$$
where $\mu_{a_z}$ acts via $(u,v) \mapsto (\zeta u , \zeta^{-1} v) $
and where $x=u^{a_z}$ and $y=v^{a_z}$. 
\end{enumerate}

The purpose of \cite[Appendix A]{AOV2} was to show that twisted curves
form an Artin stack which is locally of finite type over $\ZZ$. There
are two steps involved.

The
introduction of the stack structure over the markings is a
straightforward step: the stack $\fM_{g,\delta}^{tw}$ of twisted curves
with genus $G$ and $n$ markings is
the infinite disjoint union $\fM_{g,\delta}^{tw}=\sqcup \fM_{g,\delta}^{r}$,
where $r$ runs over the possible {\em marking indices}, namely vectors
of positive integers $r=(r_1,\ldots,r_n)$, and the 
stacks $\fM_{g,\delta}^{r}$ are all isomorphic to each other - the
universal family over $\fM_{g,\delta}^{r}$ is obtained form that over
$\fM_{g,\delta}^{(1,\ldots,1)}$ by taking the $r_i$-th root of $\bar \Sigma_i$.

The more subtle point is the introduction of twisting at nodes. Olsson
achieves this using the canonical log structure of prestable curves,
and provides an equivalence between twisted curves with
$r=(1,\ldots,1)$ and log-twisted 
curves. A {\em log twisted curve} over a scheme $S$ is the data of a
prestable curve $C/S$ along with a simple extension $\cM^S_{C/S}
\hookrightarrow \cN$. Here $\cM^S_{C/S}$ is F. Kato's canonical locally free
log structure of the base $S$ of the family of  prestable curves
$C/S$, and a {\em simple extension} is an injective morphism $\cM^S_{C/S}
\hookrightarrow \cN$ of locally free log
structures of equal rank where an irreducible element is sent to a
multiple of an irreducible element up to units.

We now describe an aspect of this equivalence which is relevant for
our main results. Consider a family of prestable curves $C/S$ and
denote by $\iota: \operatorname{Sing} C/S \to C$ the embedding of the 
locus where $\pi: C\to S$ fails to be smooth. A {\em node function} is a
section $a$ of $\pi_*\iota_* \NN_{\operatorname{Sing} C/S}$. In other words it gives a
positive integer $a_z$ for each singular point $z$ of $C/S$ in a
continuous manner. Given a morphism $T \to S$, we say that  a twisted
curve $\cC /T$ with coarse moduli space 
$C_T$ is {\em $a$-twisted} over $C/S$  if the index of a node of $\cC$ over a
node $z$ of $C$ is precisely $a_z$. 

\begin{proposition}\label{Prop:twisting-proper}
Fix a family of prestable curves $C/S$ of genus $g$ with $n$ markings
over a noetherian scheme 
$S$. Further fix marking indices $r=(r_1,\ldots, r_n)$ and a node
function $a$. Then the category of $a$-twisted curves over $C/S$ with marking
indices given by $r$ is a proper and quasi-finite tame stack over $S$.  
\end{proposition}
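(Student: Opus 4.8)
The plan is to establish the three assertions—tameness, quasi-finiteness, and properness over $S$—\'etale-locally on $S$, after reducing the moduli problem to a completely explicit local model at the nodes. First I would dispose of the marking indices. Taking the $r_i\th$ root along the section $\sigma_i$ in the smooth locus is a canonical construction carrying no moduli, so, exactly as recorded above for the stacks $\fM_{g,\delta}^{r}$, root-taking along the $\bar\Sigma_i$ gives an isomorphism between the stack of $a$-twisted curves with marking indices $r$ and the stack of $a$-twisted curves with trivial marking indices over $C/S$. Since the markings lie in the smooth locus while the node function concerns only $\operatorname{Sing} C/S$, the two kinds of twisting decouple, and it suffices to treat the node twisting with $r=(1,\ldots,1)$.

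Next I would invoke Olsson's equivalence recalled above between twisted curves with $r=(1,\ldots,1)$ and log twisted curves, so that the stack in question is identified with the stack of simple extensions $\cM^S_{C/S}\into\cN$ whose index at each node $z$ is the prescribed $a_z$. Working \'etale-locally on $S$, I may assume $\operatorname{Sing} C/S$ is a disjoint union of sections $z$, each carrying a smoothing parameter $t_z\in\cO_S$ and an irreducible generator $e_z$ of $\cM^S_{C/S}$ with $e_z\mapsto t_z$. Fixing the index $a_z$ forces the irreducible generator $f_z$ of $\cN$ above $z$ to satisfy $e_z=a_zf_z$, whence its image $s_z\in\cO_S$ obeys $s_z^{a_z}=t_z$; the automorphisms of the extension fixing $\cM^S_{C/S}$ and the structure morphism are exactly the units $u$ with $u^{a_z}=1$, acting by $s_z\mapsto us_z$. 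Thus the data at $z$ is an $a_z\th$ root of $t_z$ together with a $\mu_{a_z}$-symmetry, and the whole stack becomes the fiber product over $S$ of the local stacks $\cR_z=[\Spec(\cO_S[s_z]/(s_z^{a_z}-t_z))/\mu_{a_z}]$, that is, the $a_z\th$ root stack of the node divisor.

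From this presentation the three properties are formal. Each $\Spec(\cO_S[s_z]/(s_z^{a_z}-t_z))\to S$ is finite and free of rank $a_z$ because the defining polynomial is monic, so $\cR_z$ is the quotient of a finite flat $S$-scheme by the finite flat group scheme $\mu_{a_z}$; it is therefore proper and quasi-finite over $S$, and it is tame because $\mu_{a_z}$ is diagonalizable, hence \lr{}—this is precisely where the mixed-characteristic setting is accommodated, since no invertibility of $a_z$ is required. These properties are stable under fiber product over $S$ and may be checked \'etale-locally on the base, so they pass to the global node-twisting stack; combined with the algebraicity already provided by Olsson's appendix, this yields the proposition.

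The main obstacle I anticipate is the middle step: faithfully unwinding Olsson's equivalence to see that prescribing the node index $a_z$ rigidifies the simple extension to a bare $a_z\th$ root extraction with precisely a $\mu_{a_z}$ of automorphisms, and that the global stack is \'etale-locally the fiber product of the $\cR_z$. Once that local model is secured, properness and quasi-finiteness are standard facts about root stacks and tameness is immediate from the linear reductivity of $\mu_{a_z}$, so no delicate valuative-criterion argument should be needed.
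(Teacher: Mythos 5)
Your proof is correct and takes essentially the same route as the paper: both use Olsson's log-twisted-curve equivalence to identify the stack of $a$-twisted curves with a fiber product over $S$ of $a_z$-th root stacks, one per node, and then deduce properness, quasi-finiteness and tameness from the standard presentation of a root stack as a finite flat cover modulo $\mu_{a_z}$. The only differences are cosmetic: the paper first pulls back to a versal deformation space so that the roots are extracted from honest smooth divisors $D_i$, whereas you extract roots of the local smoothing parameters $t_z$ directly in étale charts, and you make explicit the reduction of the marking indices that the paper leaves implicit.
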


\begin{proof}
The problem is local on $S$, and further it is stable under base
change in $S$. So it is enough to prove this when $S$ is a versal
deformation space of a prestable curve $C_s$  of genus $g$ with $n$ 
markings, over a closed geometric
point $s\in S$, in such a way that we have a  chart $\NN^k\to \cM^S_{C/S}$
of the log structure, where $k$ is the number of nodes of $C_s$.  
The image  of the $i$-th generator  of $\NN^k$ in $\cO_S$ is the
defining equation of the smooth divisor $D_i$ where the $i$-th node
persists. Now consider an $a$-twisted curve over $\phi:T\to S$,
corresponding to a simple extension
$\phi^*\cM^S_{C/S} \to \cN$ where the image of the $i$-th generator
$m_i$ becomes an $a_i$-multiple up to units. This precisely means that
$\cO_{C_T}^* m_i$, the principal bundle associated to $\cO_S(-D_i)$,
is an $a_i$-th power. In other words, the stack of $a$-twisted curves
over $C/S$ is isomorphic to the stack $$S(\sqrt[a_1]{D_1}\cdots
\sqrt[a_n]{D_n}) \ \ = \ \ S(\sqrt[a_1]{D_1})\, \times_S\cdots
\times_S\,S(\sqrt[a_n]{D_n}) $$ encoding $a_i$-th roots of
$O_S(D_i)$. This is evidently proper and quasi-finite tame stack over $S$. 
\end{proof}

We now turn to the index of twisted points in a stable $p$-torsor.

\begin{lemma} \label{index_bounded}
Let $(\cX, \cG, Y)$ be a stable $p$-torsor. Then the index
of a point $x\in\cX$ divides $p$.
\end{lemma}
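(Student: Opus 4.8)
The plan is to reduce to a local computation at a twisted point of $\cX$ and to extract the divisibility from the fact that the total space $Y$ is an honest scheme (a nodal curve), not merely an algebraic stack. Let $x\in\cX$ be a point of index $r$, i.e.\ with automorphism group scheme $\mu_r$; points with trivial automorphism have index $1$, which divides $p$, so I may assume $r>1$. By the local description of twisted curves recalled above --- at a marking $\cX$ is étale-locally a root stack $C(\sqrt[r]{\bar\Sigma})$, and at a node it is $[\,\cdot\,/\mu_r]$ --- in each case there is an étale neighbourhood in which $\cX=[V/\mu_r]$ for a scheme $V$ carrying a $\mu_r$-action that fixes a point $0\in V$ lying over $x$.

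First I would transport the torsor to this chart. Since $\cG\to\cX$ is a finite locally free group scheme of order $p$, hence affine over $\cX$, every $\cG$-torsor is an affine (in fact finite locally free of degree $p$) morphism; in particular $Y\to\cX$ is representable, so $Y_V:=Y\times_\cX V\to V$ is a scheme, finite locally free of degree $p$, equipped with a $\mu_r$-action lifting the one on $V$ and satisfying $Y=[Y_V/\mu_r]$.

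The key step is then the following implication: because $Y$ is a scheme, the quotient stack $[Y_V/\mu_r]$ is an algebraic space, and this forces the $\mu_r$-action on $Y_V$ to be \emph{free}. Restricting to the fixed point $0\in V$, the fibre $T:=(Y_V)_0$ is a $\mu_r$-invariant closed subscheme, so $\mu_r$ still acts freely on $T$; moreover $T$ is the fibre over $x$ of the $\cG$-torsor, hence a finite scheme of degree $p$ over the residue field. A free action of the finite locally free group scheme $\mu_r$ (of order $r$) makes $T\to T/\mu_r$ a $\mu_r$-torsor, so $\deg T=r\cdot\deg(T/\mu_r)$; as $\deg T=p$ this yields $r\mid p$, as desired.

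The hard part will be the passage ``$Y$ is a scheme $\Rightarrow$ the action is free $\Rightarrow$ the divisibility,'' and in particular making this precise in mixed characteristic, where $\mu_r$ need not be étale (e.g.\ $\mu_p$ in characteristic $p$). Here I would lean on two standard facts: that for a finite locally free group scheme $\Gamma$ acting on a scheme $Z$ the quotient $[Z/\Gamma]$ is an algebraic space precisely when the action is free, and that a free action produces an fppf $\Gamma$-torsor $Z\to Z/\Gamma$, whence the multiplicativity of degrees that delivers $r\mid p$. A minor point to verify along the way is the uniformity of the argument over markings and nodes, which is guaranteed by the common local presentation $\cX=[V/\mu_r]$ with $x$ a fixed point.
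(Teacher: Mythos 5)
Your proof is correct, and it reaches the divisibility by a genuinely different mechanism than the paper's. Both arguments pull $Y$ back to a chart (your $Y_V$ is the paper's $E=Y\times_\cX D$) and study the degree-$p$ fibre over the point of the chart lying above $x$; the difference is in how each extracts the factor of $r$ from that fibre. The paper works with strict henselizations and explicit coordinates: it writes the $\mu_r$-torsor $E\to Y$ in Kummer form $E\simeq\Spec\cO_Y[w]/(w^r-f)$ with $f\in\cO_Y^\times$ (using that $Y$ is a \emph{local scheme}, which is where schemeness of $Y$ enters there), computes the fibre algebra $k[s][w]/(\bar\varphi,w^r-\bar f)$ separately at smooth points and at nodes, observes it is free of rank $r$ over $k[s]/(\bar\varphi)$, and concludes $r\mid d\mid p$, where $d\mid p$ for the local degree $d$ uses transitivity of the $\cG$-action on fibres. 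You instead locate the role of schemeness in the triviality of inertia: since $[Y_V/\mu_r]$ is (an open piece of) the scheme $Y$, the $\mu_r$-action on $Y_V$ is free --- and this is exactly the right conceptual point, because freeness is \emph{not} automatic from $Y_V\to Y$ being a torsor (that only gives freeness relative to $Y$; stabilizers of the action correspond to inertia of $Y$). Restricting the free action to the invariant fibre $T$ over the fixed point and invoking the quotient theorem for free actions of finite locally free group schemes (SGA 3, Exp.~V, Th.~4.1, applicable since $T$ is affine) then gives $p=\deg T=r\cdot\deg(T/\mu_r)$ at once, with no case distinction between markings and nodes and no need for the intermediate local degree. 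The paper's computation buys explicitness and independence from stack-theoretic generalities; yours buys uniformity, brevity, and a visibly more general statement (any representable finite locally free cover of a twisted curve whose total space is an algebraic space would do).
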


\begin{proof}
Let $r$ be the index of $x$ and $d$ the local degree of $Y\to\cX$
at a point $y$ above $x$.
Since $Y\to \cX$ is finite flat of degree $p$ and $\cG$ acts
transitively on the fibers, then $d\mid p$. Let $f:\cX\to X$ be the
coarse moduli space of $\cX$. In order to compute $d$, we pass to
strict henselizations on $S$, $X$ and $Y$ at the relevant points.
Thus $S$ is the spectrum of a strictly henselian local ring $(R,\frm)$,
and we have two cases to consider.

If $x$ is a smooth point,
\begin{itemize}
\item $X\simeq \Spec R[a]^\sh$,
\item $Y\simeq \Spec R[s]^\sh$,
\item $\cX\simeq [D/\mu_r]$ with $D=\Spec R[u]^\sh$ and
$\zeta\in\mu_r$ acting by $u\mapsto \zeta u$.
\end{itemize}
Consider the fibered product $E=Y\times_\cX D$. The map $E\to Y$
is a $\mu_r$-torsor of the form $E\simeq \Spec \cO_Y[w]/(w^r-f)$
for some invertible function $f\in\cO_Y^\times$, and $E\to D$ is a
$\mu_r$-equivariant map given by $u\mapsto \varphi w$ for some function
$\varphi$ on $Y$.
Let $\tilde x:\Spec k\to D$ be a point mapping to $x$ in $\cX$ i.e.
corresponding to $u=\frm=0$, and let $\bar\varphi,\bar f$ be the
restrictions of $\varphi,f$ to $Y_{\tilde x}$. The preimage
of $\tilde x$ under $E\to D$ is a finite $k$-scheme with algebra
$k[s][w]/(\bar\varphi,w^r-\bar f)$.
We see that $d=r\dim_k k[s]/(\bar\varphi)$
and hence the index $r$ divides $p$.

If $x$ is a singular point, there exist $\lambda,\mu,\nu$
in $\frm$ such that
\begin{itemize}
\item $X\simeq \Spec (R[a,b]/(ab-\lambda))^\sh$,
\item $Y\simeq \Spec (R[s,t]/(st-\mu))^\sh$,
\item $\cX\simeq [D/\mu_r]$ where $D=\Spec (R[u,v]/(uv-\nu))^\sh$,
\end{itemize}
and $\zeta\in\mu_r$ acts by $u\mapsto \zeta u$ and $v\mapsto \zeta^{-1}v$.
The scheme $E=Y\times_\cX D$ is of the form
$E\simeq \Spec \cO_Y[w]/(w^r-f)$ for some invertible function
$f\in\cO_Y^\times$, and the map $E\to D$ is given by
$u\mapsto \varphi w$, $v\mapsto \psi w^{-1}$ for some functions
$\varphi,\psi$ on $Y$ satisfying $\varphi\psi=\nu$. Let
$\tilde x:\Spec k\to D$ be a point mapping to $x$
and let $\bar\varphi,\bar\psi,\bar f$
be the restrictions of $\varphi,\psi,f$ to $Y_{\tilde x}$. The preimage
of $\tilde x$ under $E\to D$ is a finite $k$-scheme with algebra
$k[s,t][w]/(st,\bar\varphi,\bar\psi,w^r-\bar f)$.
We see that $d=r\dim_k k[s,t]/(st,\bar\varphi,\bar\psi)$
and hence $r$ divides $p$.
\end{proof}

\subsection{Proof of proposition~\ref{algebraicity}}

Let $\delta=(\delta_1,\dots,\delta_n)$ be the sequence of degrees of the
markings $P_i$ on the total space of stable $p$-torsors, with each
$\delta_i$ equal to $1$ or $p$.
We build $ST_{p,g,h,n}$ from existing stacks: the stack
$\ocM_{g,\delta}$ of Deligne-Mumford-Knudsen stable marked curves
(for the family of curves $Y$), the stack $\fM$ of twisted curves
(for the family of marked twisted curves $\cX$), and Hilbert schemes
and $\Hom$ stacks for construction of $Y \to \cX$ and $\cG$.

{\sc Bounding the twisted curves.} We have an evident forgetful
functor $ST_{p,g,h,n} \to \ocM_{g,\delta} \times \fM$. Note that the image
$ST_{p,g,h,n} \to \fM$ lies in an open substack $\fM'$ of finite type over
$\ZZ$: the index of the twisted curve $\cX$ divides $p$ by
Lemma~\ref{index_bounded}, and its
topological type is bounded
by that of $Y$. The stack $\fM'$ parametrizing such twisted curves
is of finite type over $\ZZ$ by \cite[Corollary A.8]{AOV2}.

Set $M_{Y,\cX} = \ocM_{g,\delta} \times \fM'$. This is an algebraic stack of
finite type over $\ZZ$.

{\sc The map $Y \to \cX$.} Consider the universal family $Y \to
M_{Y,\cX}$  of stable curves of genus $g$ and the universal family
$\cX \to M_{Y,\cX}$ of twisted curves, with associated family of coarse
curves $X \to M_{Y,\cX}$. Since Hilbert schemes of fixed Hilbert
polynomial are of finite type, there is an algebraic stack
$Hom^{\leq p}_{M_{Y,\cX}}({Y,X})$, of finite type over $M_{Y,\cX}$,
parametrizing morphisms $Y_s \to X_s$ of degree $\leq p$ between the
respective fibers. By \cite[Corollary C.4]{AOV2} the stack
$Hom^{\leq p}_{M_{Y,\cX}}({Y,\cX})$ corresponding to maps $Y_s\to \cX_s$
with target the twisted curve  is of finite type over
$Hom^{\leq p}_{M_{Y,\cX}}({Y,X})$, hence over $M_{Y,\cX}$. There is an open
substack $M_{Y\to\cX}$ parametrizing
flat morphisms of degree precisely $p$. We have an evident forgetful
functor $ST_{p,g,h,n}\to M_{Y\to\cX}$ lifting the functor $ST_{p,g,h,n} \to
\ocM_{g,\delta} \times \fM'$ above.

{\sc The rigidified group scheme $\cG$.} The scheme $Y_2 = Y\times_\cX Y$
is flat of degree $p$ over $Y$. Giving it the structure of a group scheme
over $Y$ with unit section equal to the diagonal $Y\to Y_2$ is tantamount
to choosing structure $Y$-arrows  $m:Y_2 \times_Y Y_2 \to Y_2$
and $i': Y_2 \to Y_2$, which are parametrized by a $Hom$-scheme, and
passing to the closed subscheme where these give a group-scheme
structure
(that this condition is closed follows from representability
of the Weil restriction, see the discussion in the appendix and in
particular Corollary \ref{equalizer}).
Giving a group scheme $\cG$
over $\cX$ with isomorphism $\cG \times_\cX Y \simeq Y_2$ is tantamount to
giving descent data for $Y_2$ with its chosen group-scheme
structure. This is again parametrized by a suitable
$Hom$-scheme. Finally requiring that the projection $Y_2 \to Y$
correspond to an action of $\cG$ on $Y$ is a closed condition
(again by Weil restriction, see Corollary \ref{equalizer}).

Passing to a suitable $Hom$-stack we can add 
a homomorphism  $\ZZ/p\ZZ \to \cG$, giving 
a section $\cX \to \cG$ (equivalently a morphism $\cX \to \cG^u$, see
remark~\ref{closure}).
By \cite[corollary~1.3.5]{KM}, the locus of the base where this
section is a generator is closed.
Since $Y_2\to Y$ and $Y \to \cX$ are finite, all the necessary
$Hom$ stacks are in fact of finite type. 

The resulting stack is clearly isomorphic to $ST_{p,g,h,n}$. 

\section{Properness} \label{Sec:properness}

Since $ST_{p,g,h,n}\to \Spec \ZZ$ is of finite type, we need to prove the
valuative criterion for properness. 

We have the following situation: 
\begin{enumerate}
\item $R$ is a discrete valuation ring with spectum $S = \Spec R$, fraction
  field $K$ with corresponding generic point $\eta = \Spec K$, and residue field
  $\kappa$ with corresponding special point $s=\Spec \kappa$.
\item $(\cX_\eta,\cG_\eta, Y_\eta)$ a stable marked
$p$-torsor of genus $g$ over $\eta$.
\end{enumerate}

By an {\em extension} of $(\cX_\eta, \cG_\eta, Y_\eta)$ across $s$ we mean 
\begin{enumerate}
\item a local extension $R \to R'$ with $K'/K$ finite,
\item a stable marked $p$-torsor $(\cX', \cG', Y')$ of genus $g$ over $S' =
  \Spec R'$, and
\item an isomorphism $(\cX', \cG', Y')_\eta' \simeq (\cX_\eta,
  \cG_\eta, Y_\eta) \times_\eta \eta'$.
\end{enumerate}

We have

\begin{proposition}
An extension exists. When extension over $S'$ exists, it is unique up
to a unique isomorphism.
\end{proposition}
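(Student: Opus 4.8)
The plan is to verify both halves of the valuative criterion by following the strategy of \cite{A-Lubin}: extend the stable curve $Y_\eta$ together with the action of $\ZZ/p\ZZ$ supplied by the generator, and then \emph{reconstruct} the group scheme and the twisted base from the extended data. The core analytic input is \cite[Proposition 1.2.1]{Raynaud} together with the theory of effective models of \cite{Romagny}; separatedness will then follow by running the same construction backwards, using the uniqueness built into each of its ingredients.

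First I would extend $Y$. Composing the generator $\gamma_\eta \colon (\ZZ/p\ZZ)_\eta \to \cG_\eta$ with the torsor action equips $Y_\eta$ with an action of $\ZZ/p\ZZ$ by $S$-automorphisms, which is a full set of sections and in particular generically faithful. Since $Y_\eta$ is Deligne--Mumford--Knudsen stable and carries its marking divisors $P_i$, properness of $\ocM_{g,\delta}$ yields, after a finite extension $K'/K$, a stable marked model $Y'/S'$; separatedness of $\ocM_{g,\delta}$ shows both that $Y'$ is unique and that each generating automorphism of $Y_\eta$ prolongs to an automorphism of $Y'$, so the $\ZZ/p\ZZ$-action extends uniquely to $Y'$. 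Passing to the coarse quotient produces a prestable curve $X' = Y'/(\ZZ/p\ZZ)$ over $S'$, a candidate coarse space for the twisted base.

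Next I would produce the group scheme and the torsor structure. The extended action of the \emph{constant} group $\ZZ/p\ZZ$ need not exhibit $Y'$ as a torsor on the special fiber, since the quotient map can be ramified where the action degenerates. Following \cite[Proposition 1.2.1]{Raynaud} and \cite{Romagny}, I would replace the constant group by its effective model $\cG'$: a finite flat group scheme of order $p$ whose generic fiber recovers $((\ZZ/p\ZZ)_\eta, \gamma_\eta)$ and whose action on $Y'$ is effective, characterized by a universal property. The twisted base $\cX'$ is then obtained by twisting $X'$ so that $Y' \to \cX'$ becomes a $\cG'$-torsor: by Lemma \ref{index_bounded} the required indices at markings and nodes divide $p$, and Proposition \ref{Prop:twisting-proper} shows that such twistings form a proper, separated, quasi-finite stack, so $\cX'$ is pinned down once the indices are fixed. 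One then checks that $\cX'$ is a genuine twisted curve, that $Y' \to \cX'$ is a degree-$p$ $\cG'$-torsor with $P_i = \Sigma_i \times_{\cX'} Y'$, and that the generator extends: because being a full set of sections is a closed condition \cite[Corollary 1.3.5]{KM}, the generic generator prolongs, so $\cG'$ is rigidified.

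For separatedness, suppose two extensions are given over a fixed $S'$. Their total spaces are stable models of $Y_\eta$ carrying the same extended $\ZZ/p\ZZ$-action, hence are canonically isomorphic by separatedness of $\ocM_{g,\delta}$; the universal property of the effective model then identifies the two group schemes $\cG'$, and the separatedness in Proposition \ref{Prop:twisting-proper} identifies the two twisted bases $\cX'$, the torsor structures and generators matching automatically. The hardest part, I expect, is the middle step: reconstructing $\cX'$ and the torsor $Y' \to \cX'$ from the effective model and verifying that $\cX'$ has the prescribed local structure of a twisted curve. This demands a careful local analysis at the markings and nodes of $X'$ --- precisely the two cases isolated in the proof of Lemma \ref{index_bounded} --- matching the degeneration of the group action (which in residue characteristic $p$ may produce $\mu_p$ or $\alpha_p$) against the tame stacky structure by $\mu_r$ and $\mu_{a_z}$ required of a twisted curve.
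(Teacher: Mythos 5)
Your overall scaffolding (extend $Y$ by properness of $\ocM_{g,\delta}$, invoke Raynaud--Romagny for the group scheme, use Proposition \ref{Prop:twisting-proper} and closedness of the generator condition \cite[Corollary 1.3.5]{KM} for uniqueness) matches the paper's, but there is a genuine gap at the central step, and it is exactly the case that makes this a mixed-characteristic problem. You build the coarse base as the quotient $X' = Y'/(\ZZ/p\ZZ)$ and recover the group scheme as the effective model of the $\ZZ/p\ZZ$-action. This fails whenever the generic fiber itself has characteristic $p$ and the torsor is inseparable: the valuative criterion over $\Spec\ZZ$ must be checked for DVRs of equal characteristic $p$ (e.g.\ $\FF_p[[t]]$), and there $\cG_\eta$ may be $\mu_p$ or $\alpha_p$. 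For such $\cG_\eta$ the Katz--Mazur generator $\gamma_\eta:(\ZZ/p\ZZ)_\eta\to\cG_\eta$ is the \emph{trivial} homomorphism (it is an isomorphism only when $p$ is invertible, cf.\ \cite[Lemma 1.8.3]{KM}), so the induced $\ZZ/p\ZZ$-action on $Y_\eta$ is trivial. Then $Y'/(\ZZ/p\ZZ)=Y'$, which is not the coarse space of $\cX_\eta$ (the map $Y_\eta\to X_\eta$ is purely inseparable of degree $p$ on the relevant components), and the effective model of a trivial action is the trivial group scheme, which cannot have generic fiber $\cG_\eta$. Your uniqueness argument inherits the same defect, since it leans on the universal property of this effective model.

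The paper circumvents both problems. For the coarse base it splits the components of $Y$ into orbits $Y_i$ and sets $X_i=Y_i/G$ only when the $G=\ZZ/p\ZZ$ action on $Y_i$ is nontrivial; when the action is trivial (which forces $\mathrm{char}(K)=p$) it instead takes the Frobenius twist $X_i=Y_i^{(p)}$ with $\pi_i$ the relative Frobenius, and glues. For the group scheme it does not use an effective model of the constant group at all: it takes the schematic closure of $\cG_\eta$ inside the automorphism group scheme $\Aut_U V$ (localized at generic points of the special fiber, then over the smooth locus, then across generic nodes), and it is to this closure that \cite[Proposition 1.2.1]{Raynaud} and \cite[Theorem 4.3.5]{Romagny} apply, yielding flatness and the torsor property uniformly in all characteristics; uniqueness then comes for free because each step is a closure or is pinned down by separatedness (of $\ocM_{g,\delta}$, of root stacks, and of the stack in Proposition \ref{Prop:twisting-proper}). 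A repaired version of your argument would have to replace ``quotient by $\ZZ/p\ZZ$'' and ``effective model of $\ZZ/p\ZZ$'' by these Frobenius-twist and closure constructions; as written, your proof only covers the case where $\gamma_\eta$ is an isomorphism, i.e.\ essentially the case $\mathrm{char}(K)=0$.
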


\begin{proof}

{\sc Extension of $Y_\eta$.} Since $\ocM_{g,\delta}$ is proper, there is a
stable marked curve $Y'$ extending $Y_\eta$ over some $S'$, and this
extension is unique up to a unique isomorphism. We replace $S$ by $S'$,
and assume  that there is $Y$ over $S$ with generic fiber $Y_\eta$.

{\sc Coarse extension of $\cX_\eta$.} By unicity, the action of $G$ on
$Y_\eta$ induced by the map $G_{\cX_\eta}\to\cG_\eta$ extends to $Y$.
There is a finite extension $K'/K$ such that the
intersection points of the orbits of geometric irreducible components of
$Y_\eta$ under the action of $G$ are all $K'$-rational. We may and do
replace $S$ by the spectrum of the integral closure of $R$ in $K'$.
Let us call $Y_1,\dots,Y_m$ the orbits of irreducible
components of $Y$ and $\{y_{i,j}\}_{1\le i,j\le m}$ their intersections,
which is a set of disjoint sections of $Y$. For each $i=1,\dots,m$
we define a morphism $\pi_i:Y_i\to X_i$ as follows. If the action of $G$
on $Y_i$ is nontrivial we put $X_i:=Y_i/G$ and $\pi_i$ equal to the
quotient morphism. If the action of $G$ on $Y_i$ is trivial, note that
we must have $\mathrm{char}(K)=p$, since the map from $Y_i$ to its
image in $\cX$ is a $\cG$-torsor while $G_\cX\to \cG$ is an isomorphism
in characteristic $0$. Then we consider the Frobenius twist
$X_i:=Y_i^{(p)}$ and we define $\pi_i:Y_i\to X_i$ to be the relative
Frobenius. Finally we let $X$ be the scheme obtained by glueing the
$X_i$ along the sections $x_{i,j}=\pi_i(y_{i,j})\in X_i$ and
$x_{j,i}=\pi_j(y_{i,j})\in X_j$. There are markings $\Sigma_i^X\subset X$
given by the closures in $X$ of the generic markings $\Sigma_i^{X_\eta}$.
It is clear that the morphisms $\pi_i$ glue to a morphism $\pi:Y\to X$.

{\sc Extension of $\cX_\eta$ and $Y_\eta\to\cX_\eta$ along generic
nodes and markings.} In the following two lemmas we extend the stack
structure of $\cX_\eta$, and then the map $Y_\eta\to\cX_\eta$, along the
generic nodes and the markings:

\begin{lemma}
There is a unique extension $\ocX$ of the twisted curve $\cX_\eta$ over $X$,
such that $\ocX \to X$ is an isomorphism away from the generic nodes
and the markings.
\end{lemma}

\begin{proof}
We follow \cite[proof of proposition 4.3]{AOV2}.
First, let $\Sigma_{i,\eta}^{\cX_\eta}$ be a marking on $\cX_\eta$ and let
$P_{i,\eta}\subset Y_\eta$ be its preimage. There are
extensions $P_i\subset Y$ and $\Sigma_i^X\subset X$. Let $r$ be
the index of $\cX_\eta$ at $\Sigma_{i,\eta}^{\cX_\eta}$. Then we define
$\cX$ to be the stack of $r$-th roots of $\Sigma_i^X$ on $X$.
This extension is unique by the separatedness of stacks of $r$-th roots.

Now let $x_\eta\in X_\eta$ be a node with index $r$ and let $x\in X_s$
be its reduction. Locally in the {\'e}tale
topology, around $x$ the curve $X$ looks like the
spectrum of $R[u,v]/(uv)$. Let $B_u$ resp. $B_v$ be the branches at $x$
in $X$. The stacks of $r$-th roots of the divisor $u=0$ in $B_u$ an of
the divisor $v=0$ in $B_v$ are isomorphic and glue to give a stack
$\ocX$. By definition of $r$ we have $\ocX_\eta\simeq\cX_\eta$.
This extension is unique by the separatedness of stacks of $r$-th roots,
so the construction of $\ocX$ descends to $X$.
\end{proof}

\begin{lemma}
There is a unique lifting $Y \to \ocX$.
\end{lemma}

\begin{proof}
We need to check that there is a lifting at any point $y\in Y_s$ which
either lies on a marking or is the reduction of a generic node. We can
apply the purity lemma \cite[Lemma 4.4]{AOV2} provided that the local
fundamental group of $Y$ at $y$ is trivial and the local Picard group of
$Y$ at $y$ is torsion-free. In order to see this, we replace $R$ by its
strict henselization and $Y$ by the spectrum of the strict henselization
of the local ring at $y$. We let $U=Y\setminus \{y\}$.

If $y$ lies on a marking then $Y$ is isomorphic to the spectrum of
$R[a]^\sh$. Since this ring is local regular of dimension $2$, the
scheme $U$ has trivial fundamental group by the Zariski-Nagata purity
theorem, and trivial Picard group by Auslander-Buchsbaum. Hence the
purity lemma applies.

If $y$ is the reduction of a generic node, then $Y$ is isomorphic to
the strict henselization of $R[a,b]/(ab)$. Let $B_a=\Spec(R[a]^\sh)$
resp. $B_b=\Spec(R[b]^\sh)$ be the branches at $y$ and $U_a=U\cap B_a$,
$U_b=U\cap B_b$.

The schemes $U_a$ and $U_b$ have trivial fundamental group by
Zariski-Nagata, and they intersect in $Y$ in a single
point of the generic fibre. Moreover the map $U_a\,\sqcup\,U_b\to U$,
being finite surjective and finitely presented,
is of effective descent for finite {\'e}tale coverings
\cite[Exp. IX, cor. 4.12]{SGA1}. It then follows from the Van Kampen
theorem \cite[Exp. IX, th. 5.1]{SGA1} that $\pi_1(U)=1$.

For the computation of the local Picard group, first notice that
since $B_a,B_b$ are local regular of dimension $2$ we have
$\pic(U_a)=\pic(U_b)=0$, and moreover it is easy to see that
$H^0(U_a,\cO_{U_a}^\times)=R^\times$ and similarly for $U_b$.
Now we consider the long exact sequence in cohomology associated to the
short exact sequence
$$
0\to\cO_U^\times\to i_{a,*}\cO_{U_a}^\times\oplus i_{a,*}\cO_{U_a}^\times
\to i_{ab,*}\cO_{U_{ab}}^\times\to 0
$$
where the symbols $i_?$ stand for the obvious closed immersions.
We obtain
\begin{align*}
\pic(U) & 
=\mathrm{coker}
\big(H^0(U_a,\cO_{U_a}^\times)\oplus H^0(U_b,\cO_{U_a}^\times)\to
H^0(U_{ab},\cO_{U_{ab}}^\times)\big) \\
& =K^\times/R^\times = \ZZ,
\end{align*}
which is torsion-free as desired.
\end{proof}

Note that we still need to introduce stack structure over special
nodes of $\ocX$.

{\sc Extension of $\cG_\eta$ over generic points of $\ocX_s$.}
Let $\xi$ be the generic point of a component of $\ocX_s$. Let
$U$ be the localization of $\ocX$ at $\xi$ and $V$ be its inverse
image in $Y$. Consider the closure $\cG_\xi$ of $\cG_\eta$ in
$\Aut_UV$. 

\begin{proposition}\label{Raynaud-Romagny}
The scheme $\cG_\xi\to U$ is a finite flat group scheme of degree $p$,
and $V\to U$ is a $\cG_\xi$-torsor.
\end{proposition}

\begin{proof}
This is a generalization of \cite[Proposition 1.2.1]{Raynaud}, see
\cite[Theorem 4.3.5]{Romagny}. 
\end{proof}

{\sc Extension of $\cG_\eta$ over the smooth locus of $\ocX/S$.}
Quite generally, for a stable $p$-torsor $(\cX,\cG,Y)$ over a
scheme $T$, by $\Aut_\cX Y$ we denote the algebraic stack whose
objects over an $T$-scheme $U$ are pairs $(u,f)$ with $u\in\cX(U)$
and $f$ a $U$-automorphism of $Y\times_\cX U$.
Now consider $\ocX{}^\sm$, the smooth locus of $\ocX/S$, and its inverse image
$Y^\sm$ in $Y$. Then $Y^\sm \to \ocX{}^\sm$ is flat. Let $\cG^\sm$ be the
closure of $\cG_\eta$ in $\Aut_{\ocX{}^\sm} Y^\sm$. 

\begin{proposition} \label{extension_to_sm_locus}
The scheme $\cG^\sm\to \ocX{}^\sm$ is a finite flat group scheme of
degree $p$, and $Y^\sm\to \ocX{}^\sm$ is a $\cG^\sm$-torsor.
\end{proposition}

\begin{proof}
Given Proposition \ref{Raynaud-Romagny}, and since $\ocX{}^\sm$ has local
charts $U\to\ocX{}^\sm$ with $U$ regular $2$-dimensional, this follows
from \cite[Propositions 2.2.2 and 2.2.3]{A-Lubin}.
\end{proof}

{\sc Extension of $\cG^\sm$ over generic nodes of $\cX/S$.}
Consider the complement $\ocX{}^0$ of the isolated nodes of $\ocX_s$, and
its inverse image 
$Y^0$ in $Y$. 

\begin{lemma}
The morphism $Y^0 \to \ocX{}^0$ is flat.
\end{lemma}

\begin{proof}
It is enough to verify the claim at the reduction $x_s$ of an
arbitrary generic node $x_\eta\in X_\eta$. Since generic nodes remain
distinct in reduction, it is enough to prove that $Y\to \cX$ is
flat at a chosen point $y_s\in Y$ above $x_s$. Since the branches at
$y_s$ are not exchanged by $\cG$, {\'e}tale locally $Y$ and $\cX$ are the
union of two branches which are flat over $S$ and the restriction of
$Y\to\cX$ to each of the branches at $x_s$ is flat. Since proper
morphisms descend flatness (\cite{EGA}, IV.11.5.3) it follows
that $Y\to \cX$ is flat at $y_s$.
\end{proof}

Let $\cG^0$ be the closure of $\cG^\sm$ in $\Aut_{\ocX{}^0} Y^0$.

\begin{proposition}
  The stack $\cG^0\to \ocX{}^0$ is a finite flat group scheme of
  degree $p$, and  $Y^0\to \ocX{}^0$  is a $\cG^0$ torsor.
\end{proposition}

\begin{proof}
We only have to look around the closure of a generic node. Again since
proper morphisms descend flatness, it is enough to prove the claim
separately on the two branches. Then the result follows again from
\cite[Propositions 2.2.2 and 2.2.3]{A-Lubin} by the same reason as
in the proof of \ref{extension_to_sm_locus}.
\end{proof}

{\sc Twisted structure at special nodes.} Let $P$ be a special node of $X$.
By \cite[Section 3.2]{A-Lubin} there is a canonical twisted structure $\cX$
at $P$ determined by the local degree of $Y/X$. If near a given node
$Y_\eta/X_\eta$ is inseparable, then this degree is $p$. Otherwise  $Y/X$
has an action of $\ZZ/p\ZZ$ which is nontrivial near $P$, and therefore
the local degree is either 1 or $p$. Then $\cX$ is twisted with index $p$
at $P$ whenever this local degree is $p$. These twisted structures at the
various nodes $P$ glue to give a twisted curve $\cX$. 

We claim that
this $\cX$ is unique up to a unique isomorphism. This follows from
Proposition \ref{Prop:twisting-proper} below. Indeed, let $a$ be the
node function which to a node $P$ of $X$ gives the local degree of
$Y/X$ at $Y$, and let $r_i$ be the fixed indices at the sections. Then
the stack of $a$-twisted curves over $X/S$ with markings of indices
$r_i$ is proper over $S$, hence $\cX$ is uniquely determined by $\cX_\eta$ up
to unique isomorphism.  

By
\cite[Lemma 3.2.1]{A-Lubin}, there is a unique lifting $Y \to \cX$, and by
\cite[Theorem 3.2.2]{A-Lubin} the group scheme $\cG^0$ extends
uniquely to $\cG \to \cX$ such that $Y$ is a $\cG$-torsor. The
rigidification extends immediately by taking the closure, since $\cG
\to \cX$ is finite.
\end{proof}

\section{Examples}\label{Sec:examples}

\subsection{First, some non-examples}\label{Sec:nonex} Consider a smooth projective
curve $X$ of genus $h>1$ in  
characteristic $p$ and and a $p$-torsion point in its Jacobian,
corresponding to a 
$\mu_p$-torsor $Y' \to X$. This is {\em not} a stable $p$-torsor in the sense of
Definition~\ref{Def:stable-torsor}: the curve $Y'$ is necessarily singular.
In fact, $Y'\to X$ may be described by a locally logarithmic differential
form $\omega$ on $X$, such that if locally $\omega=df/f$ for some
$f\in\cO_X^\times$ 
then $Y'$ is given by an equation $z^p=f$. Since the genus $h>1$, all
differentials on $X$ have zeroes, and each zero of $\omega$ (i.e. a
zero of the derivative 
of $f$ with respect to a coordinate) contributes to a unibranch
singularity on $Y'$.

Now consider a ramified $\ZZ/p\ZZ$-cover $Y\to X$ of smooth projective curves
over a field. 
Let $y\in Y$ be a fixed point for the action of $\ZZ/p\ZZ$ and let
$x\in X$ be its 
image. In characteristic $0$, since the stabilizer of $y$ is a
multiplicative group, 
the curve $X$ may be twisted at $x$ to yield a stable $\ZZ/p\ZZ$-torsor
$Y\to\cX$. However in 
characteristic $p$ the stabilizer is additive and the result is not a
$\ZZ/p\ZZ$-torsor. Hence ramified
covers of smooth curves in characteristic $p$ do not provide stable
$\ZZ/p\ZZ$-torsors. 

However something else does occur in both
examples: the torsor $Y'\to X$ of the first example, and the branched
cover  $Y \to X$ in
the second, lift to characteristic 0. The reduction back to characteristic $p$ of the
corresponding stable torsor ``contains the original cover'' in the
following sense: there is a unique component $\cX$ whose coarse moduli space
is isomorphic to $X$. In particular that component $\cX$ is
necessarily a twisted curve, and the group scheme over it has to
degenerate to $\alpha_p$ over the twisted points.  We see a
manifestation of this in the next example.  
\Dan{Added this discussion}

\subsection{Limit of a $p$-isogeny of elliptic curves} Now consider the case where $X$ is an
elliptic curve, with a marked point $x$, over a discrete valuation ring $R$ of characteristic~0
and residue characteristic $p$. For simplicity assume that $R$ contains $\mu_p$; let
$\eta$ be the generic point of $\Spec R$ and  $s$  the closed point of $\Spec R$. Given a
$p$-torsion point on $X$ with non-trivial reduction, we obtain a corresponding nontrivial
$\mu_p$-isogeny $Y'\to X$. Over the generic point $\eta$ we can make $Y'_\eta$
stable by marking the fiber $P_\eta$ over $x_\eta$. But note that the reduction of $P_\eta$
in $Y$ is not \'etale, hence something must modified. Since our stack is proper, a
stable $p$-torsor $Y \to \cX$ limiting $Y'_\eta \to X_\eta$ exists, at least over a base change
of $R$. Here is how to describe it.

 Consider the completed local ring
$\cO_{Y',O}\simeq R[[Z]]$ at the origin $O\in Y'_s$ and its spectrum $\DD$. Then $\DD_\eta$
is identified with an open $p$-adic disk modulo Galois action. Write
$P_\eta=\{P_{\eta,1},\dots,P_{\eta,p}\}$ as a sum of points permuted by the $\mu_p$-action.
Then the $P_{\eta,i}$ induce $K$-rational points of $\DD_\eta$ which moreover are
$\pi$-adically
equidistant, i.e. the valuation $v=v_\pi(P_{\eta,i}-P_{\eta,j})$ is independent of $i,j$.
It follows that after blowing-up the closed subscheme with ideal $(\pi^v,Z)$ these points
reduce to $p$ distinct points in the exceptional divisor.
Thus after twisting at the node, the fiber $Y_s \to \cX_s$ over the special point $s$
of $R$ is described  as follows: \Dan{added diagram}

$$\xymatrix{
Y_s \ar[d]\ar@{=}[r] & Y_s' \cup \PP^1 \ar[d] & P \ar[d]\ar@{_(->}[l]\\
X_s'\ar@{=}[r] & E \cup Q & \{0\}\ar@{_(->}[l]
}$$ 

Here 
\begin{itemize}
\item $Y_s$ is a union of two components $Y'_s\cup \PP^1$, attached at the origin of $Y'_s$.
\item $\cX_s$ is a twisted curve with two components $E \cup Q$
\item Here $E = X_s(\sqrt[p]{x})$ and $Q = \PP^1(\sqrt[p]{\infty})$, with the twisted points attached.
\item   The map $Y_s\to\cX_s$ decomposes into $Y'_s \to E$ and $\PP^1 \to Q$.
\item    $\PP^1 \to Q$ is an Artin--Schreier cover ramified at $\infty$.
\item The curve is marked by the inverse image of $0\in Q$  in $\PP^1$, which is a
$\ZZ/p\ZZ$-torsor $P\subset\PP^1$.
\item The map    $Y'_s \to E$ is a lift of $Y'_s \to X_s$.
\item The group scheme $\cG \to \cX$ is generically \'etale on $Q$ and generically
$\mu_p$ on $E$, but the fiber over the node is $\alpha_p$.  
\end{itemize}

Notice that we can view $Y'_s \to E$, marked by the origin on $Y'_s$, as a twisted torsor
as well, but this twisted torsor does not lift to characteristic 0 simply because
the marked point on $Y'_s$ can not be lifted to an invariant
divisor. This is an example of the phenomenon described at the end of
Section \ref{Sec:nonex} above.
\Dan{Added sentence}

A very similar picture occurs when the cover $Y'_\eta \to X_\eta$ degenerates to an
$\alpha_p$-torsor. If, however, the reduction of the cover is a $\ZZ/p\ZZ$-torsor, then
$Y'\to X$, marked by the fiber over the origin, is already stable and new components do
not appear.

\subsection{The double cover of $\PP^1$ branched over 4 points}
Consider an elliptic  double cover $Y$ over $\PP^1$ in characteristic 0 given by the
equation $y^2 = x(x-1)(x-\lambda)$. Marked by the four branched points, it becomes a
stable $\mu_2$-torsor over the twisted curve $Q= \PP^1(\sqrt{0,1,\infty,\lambda})$.
What is its reduction in characteristic 2? We describe here one case,
the others can be described in a similar way.
 
If the elliptic curve $Y$ has good ordinary reduction $E_s$, the
picture is as follows: 
 $Y_s$ has three components
$\PP^1\cup E_s\cup \PP^1$. 
 The twisted curve $\cX_s$ also has three rational
components $Q_1\cup Q_2\cup Q_3$. The map splits as $\PP^1\to Q_1$, $E_s \to Q_2$ and
$\PP^1\to Q_3$, where the first and last are generically $\mu_2$-covers, and $E_s\to Q_2$
is a lift of the hyperelliptic cover $E_s \to \PP^1$. The fibers of $\cG$ at the nodes of
$X_s$ are both $\alpha_2$. The points $0,1,\infty,\lambda$ reduce to two pairs, one pair
on each of the two $\PP^1$ components, for instance:
$$\xymatrix{
 & \PP^1\cup E_s\cup \PP^1\ar[d]&\\
\{0,1\} \ar@{^(->}[r] & Q_1\cup Q_2\cup Q_3 & \{\lambda,\infty\}.\ar@{_(->}[l]
}$$ 


\appendix

\section{Group schemes of order $p$} \label{complements_order_p}

In this appendix, we give some complements on group schemes of order $p$.
The main topic is the construction of an embedding of a given group scheme
of order $p$ into an affine smooth one-dimensional group scheme (an analogue
of Kummer or Artin-Schreier theory). Although not strictly necessary in the paper,
this result highlights the nature of our stable torsors in two respects~: firstly
because the original definition of generators in \cite[\S~1.4]{KM} involves a
smooth ambient group scheme, and secondly because the short exact sequence given
by this embedding induces a long exact sequence in cohomology that may be useful
for computations of torsors.

Anyway, let us now state the result.

\begin{definition}
Let $G\to S$ be a finite locally free group scheme of order~$p$.
\begin{enumerate}
\item A {\em generator} is a morphism of $S$-group schemes
$\gamma:(\ZZ/p\ZZ)_S\to G$ such that the sections $x_i=\gamma(i)$,
$0\le i\le p-1$, are a full set of sections.
\item A {\em cogenerator} is a morphism of $S$-group schemes
$\kappa:G\to\mu_{p,S}$ such that the Cartier dual $(\ZZ/p\ZZ)_S\to G^\vee$
is a generator.
\end{enumerate}
\end{definition}

We will prove the following.

\begin{theorem} \label{thm_embedding}
Let $S$ be a scheme and let $G\to S$ be a finite locally free group
scheme of order $p$. Let $\kappa:G\to\mu_{p,S}$ be a cogenerator.
Then $\kappa$ can be canonically inserted into a commutative diagram
with exact rows
$$
\xymatrix{
0 \ar[r] & G \ar[r] \ar[d]^\kappa & \sG \ar[r]^{\varphi_\kappa} \ar[d]
& \sG' \ar[d] \ar[r] & 0 \\
0 \ar[r] & \mu_{p,S} \ar[r] & \GG_{m,S} \ar[r]^p & \GG_{m,S}
\ar[r] & 0}
$$
where $\varphi_\kappa:\sG\to \sG'$ is an isogeny between
affine smooth one-dimensional $S$-group schemes with geometrically
connected fibres.
\end{theorem}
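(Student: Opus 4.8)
The construction is local in nature and rests on the Tate--Oort classification recalled above, together with the theory of one-dimensional smooth affine group schemes interpolating between $\GG_{a,S}$ and $\GG_{m,S}$. The plan is to realise $G$, with the embedding into $\mu_{p,S}$ furnished by $\kappa$, as the scheme-theoretic kernel of an explicit degree-$p$ isogeny $\varphi_\kappa\colon\sG\to\sG'$ of such group schemes, and then to produce the vertical maps into the Kummer sequence directly from the defining coordinates. Since the whole diagram must commute with arbitrary base change, it suffices to carry out the construction after the faithfully flat base change to the base of the Tate--Oort classification (over which $w_p$ and the relevant roots are available) and to descend afterwards. The role of the cogenerator is precisely to rigidify the Tate--Oort parameters $(a,b)$, which are otherwise ambiguous up to the scaling action of units, thereby making the resulting diagram canonical.

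First I would fix the model $\sG=\Spec\cO_S[x,(1+bx)^{-1}]$ with group law $x\ast y=x+y+bxy$, unit $0$, and inverse $x\mapsto -x/(1+bx)$, where $b$ is the Tate--Oort parameter singled out by $\kappa$. This is an open subscheme of $\AA^1_S$, hence smooth of relative dimension one, and each geometric fibre is $\GG_m$ (where $b$ is a unit) or $\GG_a$ (where $b=0$), so the fibres are geometrically connected. The homomorphism $\sG\to\GG_{m,S}$, $x\mapsto 1+bx$, furnishes the middle vertical arrow, and its restriction to the kernel $G$ recovers $\kappa$; the analogous formula on $\sG'$ gives the right vertical arrow, so that both squares will commute by the identity $1+b'\varphi_\kappa(x)=(1+bx)^p$ defining $\varphi_\kappa$.

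The core of the argument is the construction of the isogeny itself. Formally $\varphi_\kappa$ is the $p$-power map written in the multiplicative coordinate, $1+b'x'\mapsto(1+bx)^p$, with a suitable target parameter $b'$; the point is that the binomial coefficients $\binom{p}{k}$ for $0<k<p$ are divisible by $p$, and the Tate--Oort relation $ab=w_p$ (with $v_p(w_p)=1$) supplies exactly the divisibilities of $p$ by powers of $b$, through the Tate--Oort structure constants, that are needed to rewrite this as an honest polynomial map with coefficients in $\cO_S$. One then checks that $\varphi_\kappa$ is a homomorphism, that it is finite locally free of degree $p$ with scheme-theoretic kernel $G$, so that $\sG'$ is identified with the fppf quotient $\sG/G$ and $\varphi_\kappa$ is faithfully flat, and that the induced embedding $G\into\sG$ is compatible with $\kappa$. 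Exactness of the top row and commutativity with the Kummer sequence then follow, and descent from the Tate--Oort base yields the canonical diagram over $S$.

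I expect the main obstacle to be this integrality step: producing a single isogeny that specialises correctly to the $p$-power map on $\GG_m$ (Kummer theory, $b$ a unit) and to a Frobenius-type map on $\GG_a$ (Artin--Schreier theory, $b=0$) simultaneously. The naive formula $\varphi_\kappa(x)=\bigl((1+bx)^p-1\bigr)/b^p$ has poles along $b=0$ and does not extend; it is only the finer Tate--Oort bookkeeping, tying $p$ to $b$ through $w_p$, that produces coefficients regular across all characteristics at once. Verifying this uniform regularity, equivalently checking the construction on geometric fibres in residue characteristics $0$ and $p$, is where the real work lies.
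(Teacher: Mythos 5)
Your overall strategy coincides with the paper's: classify $(G,\kappa)$ by Tate--Oort data, build $\sG$ as a Sekiguchi--Oort--Suwa-type deformation group with law $x\star y=x+y+\lambda xy$, map it to $\GG_{m,S}$ by $x\mapsto 1+\lambda x$, realize $\varphi_\kappa$ as a ``divided $p$-th power'' isogeny whose kernel is $G$, and descend from the classification base (this is exactly the paper's theory of congruence group schemes $H^M_{\lambda,\mu}$ and the equivalence $TCG\otimes_\ZZ\Lambda\simeq TGC$). But there is a genuine gap at the integrality step, which you yourself identify as ``where the real work lies'': you take the deformation parameter to be the Tate--Oort coordinate $b$ of $G=G^L_{a,b}$ and claim that the relation $ab=w_p$ supplies the divisibilities needed to make $\varphi_\kappa$ polynomial. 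It does not. In $\bigl((1+bx)^p-1\bigr)/b^p$ the coefficient of $x$ is $p\,b^{1-p}$, so what is required is a section $\mu$ with $b^{\otimes(p-1)}\otimes\mu=p\cdot 1_{\cO_S}$, whereas $ab=w_p$ with $v_p(w_p)=1$ only gives $b\mid p$ up to units. Concretely, take $G=G^{\cO}_{1,w_p}=(\ZZ/p\ZZ)$ base-changed to $\Lambda[v]/(v^{p-1}-w_p)$ so that a cogenerator exists: then $b=w_p=p\cdot(\text{unit})$ and $b^{p-1}\nmid p$ for $p>2$, so your isogeny simply does not exist. Worse, even in cases where a divided-power isogeny with parameter $\lambda$ does exist, its kernel is the Tate--Oort group whose \emph{second parameter is the image of} $\lambda^{\otimes(p-1)}$ (this is the content of Theorem~\ref{TCG_equiv_TGC}); so with $\lambda=b$ you would obtain $G_{a',b^{p-1}}$ rather than $G_{a,b}$ --- the wrong group except when $p=2$.

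The missing idea is that the deformation parameter must be the cogenerator itself, not $b$. The cogenerator $\kappa$ is given by a section $v$ of $L^{-1}$ with $v^{\otimes(p-1)}=b$ (Lemma~\ref{gen_cogen}), and the correct group law is $x\star y=x+y+vxy$: then $a\otimes v^{\otimes(p-1)}=a\otimes b=w_p=p\,w_{p-1}$ with $w_{p-1}$ a unit of $\Lambda$, which yields exactly the needed relation $v^{\otimes(p-1)}\otimes\mu=p\cdot 1_{\cO_S}$ with $\mu=w_{p-1}^{-1}a$, and the kernel of the resulting isogeny has second parameter $v^{\otimes(p-1)}=b$, recovering $G$ together with $\kappa=1+v$. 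This also corrects your reading of the role of the cogenerator: it is not there merely ``to rigidify the Tate--Oort parameters $(a,b)$'' up to units, but is essential for \emph{existence} --- the paper's remark after the theorem shows that $(\ZZ/p\ZZ)_\QQ$, which has no cogenerator, admits no such embedding at all for $p>3$. Two further points you elide: the invertible sheaf underlying $\sG$ is not $L$ but a different sheaf $M$ (with $\lambda\in\Gamma(S,M^{-1})$), which is precisely why the paper introduces the separate category $TCG$ and proves an equivalence instead of writing formulas directly in the coordinates of $(L,a,b)$; and $\Spec\Lambda\to\Spec\ZZ$ is not surjective (it misses the divisors of $p-1$), so the fppf cover used for descent must also include $S\otimes_\ZZ\ZZ[1/p]$, over which $\kappa$ is an isomorphism and one takes $\sG=\sG'=\GG_{m,S}$ with the $p$-th power map, as the paper does.
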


In order to obtain this, we introduce two categories of invertible sheaves
with sections: one related to groups with a cogenerator and
one related to groups defined as kernels of isogenies, and we compare these
categories.

\begin{remark}
Not all group schemes of order $p$ can be embedded into an affine
smooth group scheme as in the theorem. For example, assume that there
exists a closed immersion from $G=(\ZZ/p\ZZ)_\QQ$ to some
affine smooth one-dimensional geometrically connected $\QQ$-group
scheme $\sG$. Then $\sG$ is a form of $\GG_{m,\QQ}$ and $G$ is its
$p$-torsion subgroup. Since $\sG$ is trivialized by a quadratic
field extension $K/\QQ$, we obtain $G_K\simeq\mu_{p,K}$. This
implies that
$K$ contains the $p$-th roots of unity, which is impossible for $p>3$.
Similar examples can be given for $\ZZ/p\ZZ$ over the Tate-Oort ring
$\Lambda\otimes\QQ$.
\end{remark}

\subsection{Tate-Oort group schemes}

We recall the notations and results of the Tate-Oort classification of
group schemes of degree $p$ over the ring $\Lambda$ (section~2 of
\cite{Tate-Oort}). We introduce two fibered categories:
\begin{itemize}
\item a $\Lambda$-category $TG$ of {\em triples} encoding
{\em groups},
\item a $\Lambda$-category $TGC$ of {\em triples} encoding
{\em groups with a cogenerator}.
\end{itemize}
Let $\chi:\FF_p\to\ZZ_p$ be the unique multiplicative section of the
reduction map, that is $\chi(0)=0$ and if $m\in\FF_p^\times$ then
$\chi(m)$ is the $(p-1)$-st root of unity with residue equal to $m$.
Set
$$
\Lambda=\ZZ[\chi(\FF_p),\frac{1}{p(p-1)}]\cap \ZZ_p.
$$
There is in $\Lambda$ a particular element $w_p$ equal to $p$ times a
unit.

\begin{definition}
The category $TG$ is the category fibered over $\Spec \Lambda$
whose fiber categories over a $\Lambda$-scheme $S$ are as follows.
\begin{itemize}
\item Objects are the triples $(L,a,b)$ where $L$ is an invertible
sheaf and $a\in\Gamma(S,L^{\otimes (p-1)})$,
$b\in\Gamma(S,L^{\otimes (1-p)})$ satisfy
$a\otimes b=w_p1_{\cO_S}$.
\item Morphisms between $(L,a,b)$ and $(L',a',b')$ are the morphisms
of invertible sheaves $f:L\to L'$, viewed as global sections of
$L^{\otimes -1}\otimes L'$, such that $a\otimes f^{\otimes p}=f\otimes a'$
and $b'\otimes f^{\otimes p}=f\otimes b$.
\end{itemize}
\end{definition}

The main result of \cite{Tate-Oort} is an explicit description of a
covariant equivalence of fibered categories between $TG$ and the category of
finite locally free group schemes of order $p$. The group scheme
associated to a triple $(L,a,b)$ is denoted $G^L_{a,b}$.
Its Cartier dual is isomorphic to $G^{L^{-1}}_{b,a}$.

\begin{examples}
We have $(\ZZ/p\ZZ)_S=G^{\cO_S}_{1,w_p}$ and $\mu_{p,S}=G^{\cO_S}_{w_p,1}$.
Moreover if $G=G^L_{a,b}$ then a morphism $(\ZZ/p\ZZ)_S\to G$ is given by
a global section $u\in\Gamma(S,L)$ such that $u^{\otimes p}=u\otimes a$
and a morphism $G\to \mu_{p,S}$ is given by a global section
$v\in\Gamma(S,L^{-1})$ such that $v^{\otimes p}=v\otimes b$.
\end{examples}

\begin{lemma} \label{gen_cogen}
Let $S$ be a $\Lambda$-scheme and let $G=G^L_{a,b}$ be a finite locally
free group scheme of rank $p$ over $S$. Then:
\begin{enumerate}
\item Let $\gamma:(\ZZ/p\ZZ)_S\to G$ be a morphism of $S$-group schemes
given by a section $u\in\Gamma(S,L)$ such that $u^{\otimes p}=u\otimes a$.
Then $\gamma$ is a generator if and only if $u^{\otimes (p-1)}=a$.
\item Let $\kappa:G\to\mu_{p,S}$ be a morphism of $S$-group schemes
given by a section $v\in\Gamma(S,L^{-1})$ such that
$v^{\otimes p}=v\otimes b$. Then $\kappa$ is a cogenerator if and only
if $v^{\otimes (p-1)}=b$.
\end{enumerate}
\end{lemma}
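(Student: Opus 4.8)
The plan is to reduce both statements to an explicit computation with the Tate--Oort model together with the Katz--Mazur norm criterion, and then to deduce (2) from (1) by Cartier duality. Since the assertions are local on $S$, I would first trivialize $L\cong\cO_S$, writing $S=\Spec R$, $u\in R$ and $a\in R$ with $u^p=au$. Under the covariant equivalence of \cite{Tate-Oort} the group scheme $G=G^L_{a,b}$ becomes $\Spec R[x]/(x^p-ax)$, and $\gamma\colon(\ZZ/p\ZZ)_S\to G$ is the morphism of triples $(\cO_S,1,w_p)\to(L,a,b)$ determined by $u$. The one structural input I need from \cite{Tate-Oort} is that the section $\gamma(i)$ has $x$-coordinate $\chi(i)\,u$ for $0\le i\le p-1$: this is the place where the multiplicative section $\chi$ and the normalization of $\Lambda$ enter, since for $(\ZZ/p\ZZ)_S=G^{\cO_S}_{1,w_p}$ the $p$ points of the tautological generator are $x=\chi(i)$, and functoriality of the equivalence propagates this to $x(\gamma(i))=\chi(i)\,u$ in general. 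I expect this to be the only genuinely nonformal step.

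Next I would recall the full-set-of-sections criterion from \cite[\S 1.8]{KM}. Because $B=R[x]/(x^p-ax)$ is monogenic over $R$ with $x$ having characteristic polynomial $\lambda^p-a\lambda$, the norm of an arbitrary element is controlled by that of the universal linear form $\lambda-x$, namely $N_{B/R}(\lambda-x)=\lambda^p-a\lambda$. Hence $\gamma(0),\dots,\gamma(p-1)$ form a full set of sections if and only if the single polynomial identity
$$\prod_{i=0}^{p-1}\bigl(\lambda-x(\gamma(i))\bigr)=\lambda^p-a\lambda$$
holds in $R[\lambda]$.

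The computation of the left-hand side is then immediate. Using $x(\gamma(i))=\chi(i)\,u$ and the fact that $\{\chi(i):0\le i\le p-1\}=\{0\}\cup\mu_{p-1}$ with $\chi(0)=0$, the product factors as
$$\prod_{i=0}^{p-1}\bigl(\lambda-\chi(i)\,u\bigr)=\lambda\!\!\prod_{\zeta\in\mu_{p-1}}\!\!(\lambda-\zeta u)=\lambda\,(\lambda^{p-1}-u^{p-1})=\lambda^p-u^{p-1}\lambda,$$
where the middle step is the polynomial identity $\prod_{\zeta\in\mu_{p-1}}(\lambda-\zeta u)=\lambda^{p-1}-u^{p-1}$. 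Comparing with $\lambda^p-a\lambda$ shows that the full-set-of-sections condition is equivalent to $u^{p-1}=a$, that is, to $u^{\otimes(p-1)}=a$ after untwisting the trivialization of $L$. This proves (1).

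For (2) I would invoke Cartier duality rather than repeat the argument. By definition $\kappa$ is a cogenerator precisely when its Cartier dual $(\ZZ/p\ZZ)_S\to G^\vee$ is a generator. Since $G^\vee\cong G^{L^{-1}}_{b,a}$ and the section $v\in\Gamma(S,L^{-1})$ defining $\kappa$ is exactly the datum defining this dual morphism as a generator of $G^{L^{-1}}_{b,a}$, part (1) applied to the triple $(L^{-1},b,a)$ with section $v$ shows that the dual is a generator if and only if $v^{\otimes(p-1)}=b$. This is the asserted criterion, and once the coordinate formula $x(\gamma(i))=\chi(i)\,u$ is established, everything else is the elementary identity above combined with the Katz--Mazur norm criterion.
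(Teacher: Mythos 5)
Your proof is correct, and it shares the paper's skeleton --- localize and trivialize $L$, invoke the Tate--Oort presentation $G=\Spec R[x]/(x^p-ax)$ with $x(\gamma(i))=\chi(i)u$, and deduce (2) from (1) by Cartier duality via $G^\vee\simeq G^{L^{-1}}_{b,a}$ --- but it routes the core equivalence through a genuinely different result. The paper proves necessity by testing the single function $f=1+x$, comparing $\mathrm{Norm}(f)=(-1)^pa+1$ with $\prod(1+\chi(i)u)=(-1)^pu^{p-1}+1$, and proves sufficiency by a self-contained argument: reduce to the universal ring, invert $a$ (a nonzerodivisor there), and observe that the algebra then splits as $K^p$ by evaluation at the points $\chi(i)u$, where the norm is visibly the product of coordinates. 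You instead get both directions at once by reducing the full-set-of-sections condition to the single identity $\prod_{i}(\lambda-\chi(i)u)=\lambda^p-a\lambda$ in $R[\lambda]$ and then concluding from the factorization $\prod_{\zeta\in\mu_{p-1}}(\lambda-\zeta u)=\lambda^{p-1}-u^{p-1}$; this is shorter and makes the computation a one-line polynomial identity. One caution: the reduction to that single identity is not a formal consequence of monogenicity, and it is not contained in \cite[\S 1.8]{KM}, which only defines full sets of sections. The ``if'' direction --- that the identity for the universal linear form $\lambda-x$ implies the norm identity for \emph{every} function after \emph{every} base change --- is precisely the Katz--Mazur key lemma of \cite[\S 1.10]{KM} (a full set of sections is the same as the Cartier-divisor equality $Z=\sum[P_i]$ inside the smooth curve $\AA^1_R$); alternatively it follows from the resultant identities $\mathrm{Norm}(g(x))=\mathrm{Res}(F,g)$ and $\mathrm{Res}\bigl(\prod_i(T-x_i),g\bigr)=\prod_i g(x_i)$, valid over any commutative ring. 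So your citation should point to \S 1.10 rather than \S 1.8, and that step should be flagged as the substantive external input; granting it, your argument is a clean alternative to the paper's universal-case, generic-\'etale-splitting proof of sufficiency.
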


\begin{proof}
The proof of (2) follows from (1) by Cartier duality so we only deal with
(1). The claim is local on $S$ so we may assume that $S$ is affine equal to
$\Spec(R)$ and $L$ is trivial. It follows from \cite{Tate-Oort} that
$G=\Spec R[x]/(x^p-ax)$ and the section
$\gamma(i):\Spec(R)\stackrel{i}{\to}(\ZZ/p\ZZ)_R\to G$ is given by the
morphism of algebras $R[x]/(x^p-ax)\to R$, $x\mapsto \chi(i) u$. Thus
$\gamma$ is a generator
if and only if $\mathrm{Norm}(f)=\prod f(\chi(i)u)$ for all functions
$f=f(x)$. In particular for $f=1+x$ one finds $\mathrm{Norm}(f)=(-1)^pa+1$
and $\prod (1+\chi(i)u)=(-1)^pu^{p-1}+1$. Therefore if $\gamma$ is a generator
then $u^{p-1}=a$. Conversely, assuming that $u^{p-1}=a$ we want to prove that
$\mathrm{Norm}(f)=\prod f(\chi(i)u)$ for all $f$. It is enough to prove this
in the universal case where $R=\Lambda[a,b,u]/(ab-w_p,u^p-u)$. Since $a$ is
not a zerodivisor in $R$, it is in turn enough to prove the equality after
base change to $K=R[1/a]$. Then $G_K$ is \'etale and the morphism
$$K[x]/(x^p-ax)=K[x]/\prod (x-\chi(i)u)\to K^p$$
taking $f$ to the
tuple $(f(\chi(i)u))_{0\le i\le p-1}$ is an isomorphism of algebras. Since
the norm in $K^p$ is the product of the coordinates, the result follows.
\end{proof}

\begin{definition}
The category $TGC$ is the category fibered over $\Spec \Lambda$
whose fibers over a $\Lambda$-scheme $S$ are as follows.
\begin{itemize}
\item Objects are the triples $(L,a,v)$
where $L$ is an invertible sheaf and $a\in\Gamma(S,L^{\otimes (p-1)})$,
$v\in\Gamma(S,L^{\otimes -1})$ satisfy $a\otimes v^{\otimes (p-1)}=w_p1_{\cO_S}$.
\item Morphisms between $(L,a,v)$ and $(L',a',v')$ are the morphisms
of invertible sheaves $f:L\to L'$, viewed as global sections of
$L^{\otimes -1}\otimes L'$, such that $a\otimes f^{\otimes p}=f\otimes a'$
and $v'\otimes f=v$.
\end{itemize}
\end{definition}

By lemma~\ref{gen_cogen}, the category $TGC$ is equivalent to the
category of group schemes with a cogenerator. The functor from group
schemes with a cogenerator to group schemes that forgets the
cogenerator is described in terms of categories of invertible sheaves
by the functor $\omega:TGC \to TG$ given by
$\omega(L,a,v)=(L,a,v^{\otimes (p-1)})$.

Note also that lemma~\ref{gen_cogen} tells us that for any locally free
group scheme $G$ over a $\Lambda$-scheme $S$, there exists a finite
locally free morphism $S'\to S$ of degree $p-1$ such that $G\times_S S'$
admits a generator or a cogenerator.

\subsection{Congruence group schemes}

Here, we introduce and describe a $\ZZ$-category $TCG$ of {\em triples}
encoding {\em congruence groups}.

Let $R$ be ring with a discrete valuation $v$ and let $\lambda\in R$ be
such that $(p-1)v(\lambda)\le v(p)$. In \cite{SOS} are introduced some
group schemes $H_\lambda=\Spec R[x]/(((1+\lambda x)^p-1)/\lambda^p)$
with multiplication $x_1\star x_2=x_1+x_2+\lambda x_1x_2$. (The notation
in {\em loc. cit.} is $\sN$.) Later Raynaud called them {\em congruence
groups of level $\lambda$} and we will follow his terminology.
We now define the analogues of these group schemes over a general base.
The objects that are the input of the construction constitute the
following category.

\begin{definition}
The category $TCG$ is the category fibered over $\Spec \ZZ$
whose fibers over a scheme $S$ are as follows.
\begin{itemize}
\item Objects are the triples $(M,\lambda,\mu)$ where $M$ is an invertible
sheaf over $S$ and the global sections $\lambda\in\Gamma(S,M^{-1})$ and
$\mu\in\Gamma(S,M^{p-1})$ are subject to the condition
$\lambda^{\otimes (p-1)}\otimes \mu=p1_{\cO_S}$.
\item Morphisms between $(M,\lambda,\mu)$ and $(M',\lambda',\mu')$ are
morphisms of invertible sheaves $f:M\to M'$ viewed as sections of
$M^{-1}\otimes M'$ such that $f\otimes \lambda'=\lambda$ and
$f^{\otimes (p-1)}\otimes \mu=\mu'$.
\end{itemize}
\end{definition}

We will exhibit a functor $(M,\lambda,\mu)\leadsto H^M_{\lambda,\mu}$
from $TCG$ to the category of group schemes, with $H^M_{\lambda,\mu}$
defined as the kernel of a suitable isogeny.

First, starting from $(M,\lambda)$ we construct a smooth affine
one-dimensional group scheme denoted $\sG^{(M,\lambda)}$, or simply
$\sG^{(\lambda)}$. We see $\lambda$ as a morphism $\lambda:\VV(M)\to \GG_{a,S}$
of (geometric) line bundles over $S$, where $\VV(M)=\Spec \mathrm{Sym}(M^{-1})$
is the (geometric) line bundle associated to $M$. We define $\sG^{(\lambda)}$
as a scheme by the fibered product
$$
\xymatrix{
\sG^{(\lambda)} \ar[r]^{1+\lambda} \ar[d] & \GG_{m,S} \ar[d] \\
\VV(M) \ar[r]^{1+\lambda} & \GG_{a,S} \ . }
$$
The points of $\sG^{(\lambda)}$ with values in an $S$-scheme $T$
are the global sections $u\in\Gamma(T,M\otimes\cO_T)$
such that $1+\lambda\otimes u$ is invertible. We endow $\sG^{(\lambda)}$
with a multiplication given on the $T$-points by
$$
u_1\star u_2=u_1+u_2+\lambda\otimes u_1\otimes u_2 \ .
$$
The zero section of $\VV(M)$ sits in $\sG^{(\lambda)}$ and is
the unit section for the law just defined. The formula
$$
(1+\lambda\otimes u_1)(1+\lambda\otimes u_2)=1+\lambda\otimes (u_1\star u_2)
$$
shows that $1+\lambda:\sG^{(\lambda)}\to\GG_{m,S}$ is a morphism of group
schemes. Moreover, if the locus where $\lambda:\VV(M)\to \GG_{a,S}$
is an isomorphism is scheme-theoretically dense, then
$\star$ is the unique group law on $\sG^{(\lambda)}$ for which this holds.
This construction is functorial in $(M,\lambda)$: given a morphism of
invertible sheaves $f:M\to M'$, in other words a global section of
$M^{-1}\otimes M'$, such that $f\otimes \lambda'=\lambda$, there is
a morphism $f:\sG^{(\lambda)}\to\sG^{(\lambda')}$ making the diagram
$$
\xymatrix{
\sG^{(\lambda)} \ar[r]^{1+\lambda} \ar[d]_f & \GG_{m,S} \\
\sG^{(\lambda')} \ar[ru]_{1+\lambda'} & \\}
$$
commutative. The notation is coherent since that morphism is indeed
induced by the extension of $f$ to the sheaves of symmetric algebras.

Then, we use the section $\mu\in\Gamma(S,M^{p-1})$ and the relation
$\lambda^{\otimes (p-1)}\otimes \mu=p1_{\cO_S}$ to define an isogeny
$\varphi$ fitting into a commutative diagram
$$
\xymatrix{
\sG^{(\lambda)} \ar[r]^>>>>>{\varphi} \ar[d]_{1+\lambda} &
\sG^{(\lambda^{\otimes p})} \ar[d]^{1+\lambda^{\otimes p}} \\
\GG_{m,S} \ar[r]^{{}^{\wedge}p} & \GG_{m,S} \ . }
$$
The formula for $\varphi$ is given on the $T$-points
$u\in\Gamma(T,M\otimes\cO_T)$ by
$$
\varphi(u)=u^{\otimes p}+\sum_{i=1}^{p-1} \bin{p}{i}\,
\lambda^{\otimes (i-1)}\otimes\mu\otimes u^{\otimes i}
$$
where $\bin{p}{i}=\frac{1}{p}{p \choose i}$ is the binomial coefficient
divided by $p$. In order to check that the diagram is commutative and
that $\varphi$ is an isogeny, we may work locally on $S$ hence we
may assume that $S$ is affine and that $M=\cO_S$. In this
case, the two claims follow from the universal case i.e. from points
(1) and (2) in the following lemma.

\begin{lemma} \label{lemma_universal_case}
Let $\cO=\ZZ[E,F]/(E^{p-1}F-p)$ and let $\lambda,\mu\in\cO$ be the images
of the indeterminates $E,F$. Then, the polynomial
$$P(X)=X^p+\sum_{i=1}^{p-1} \bin{p}{i}\,\lambda^{i-1}\mu X^i \:\in\cO[X]$$
satisfies:
\begin{enumerate}
\item $1+\lambda^pP(X)=(1+\lambda X)^p$, and
\item $P(X+Y+\lambda XY)=P(X)+P(Y)+\lambda^pP(X)P(Y)$.
\end{enumerate}
\end{lemma}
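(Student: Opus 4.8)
The plan is to establish (1) by a direct binomial expansion and then to deduce (2) from (1) by exploiting that $1+\lambda^p P$ converts the twisted product $X\star Y=X+Y+\lambda XY$ into ordinary multiplication. For (1), I would expand $(1+\lambda X)^p=1+\sum_{i=1}^{p}\binom{p}{i}\lambda^i X^i$ by the binomial theorem and compare term by term with $1+\lambda^p P(X)$. The top term $\binom{p}{p}\lambda^p X^p=\lambda^p X^p$ matches the leading term of $\lambda^p P(X)$, while for $1\le i\le p-1$ the $i$-th coefficient of $\lambda^p P(X)$ is $\tfrac1p\binom{p}{i}\lambda^{p+i-1}\mu=\tfrac1p\binom{p}{i}\lambda^{i}(\lambda^{p-1}\mu)$, and here the defining relation $\lambda^{p-1}\mu=p$ of $\cO$ turns this into $\binom{p}{i}\lambda^i$, exactly the $i$-th binomial coefficient. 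Hence $1+\lambda^p P(X)=(1+\lambda X)^p$, which is (1). This step is the only place the relation $E^{p-1}F=p$ enters, and it is precisely what forces the normalization $\bin{p}{i}=\tfrac1p\binom{p}{i}$.

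For (2), I would first record the factorization $1+\lambda(X\star Y)=1+\lambda X+\lambda Y+\lambda^2 XY=(1+\lambda X)(1+\lambda Y)$. Applying (1) and raising to the $p$-th power then gives
$$1+\lambda^p P(X\star Y)=\big(1+\lambda(X\star Y)\big)^p=(1+\lambda X)^p(1+\lambda Y)^p=\big(1+\lambda^p P(X)\big)\big(1+\lambda^p P(Y)\big).$$
Expanding the right-hand side and subtracting $1$ yields $\lambda^p P(X\star Y)=\lambda^p\big(P(X)+P(Y)+\lambda^p P(X)P(Y)\big)$, so (2) follows as soon as I may cancel the common factor $\lambda^p$.

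The main obstacle is therefore justifying this cancellation, i.e.\ showing that $\lambda^p$ is a non-zerodivisor in $\cO[X,Y]$; for this it suffices that $\cO$ is an integral domain and that $\lambda=E$ is nonzero in it. I would prove that $E^{p-1}F-p$ is prime in the UFD $\ZZ[E,F]$: viewed as a polynomial in $F$ over $\ZZ[E]$ it has degree one with coefficients $E^{p-1}$ and $-p$ whose $\gcd$ is $1$, so it is primitive, and being linear in $F$ it is irreducible over the fraction field $\QQ(E)$; by Gauss's lemma it is thus irreducible, hence prime, in $\ZZ[E,F]$. Consequently $\cO$ is a domain, and since $E\notin(E^{p-1}F-p)$ for degree reasons in $F$ we have $E\ne 0$ in $\cO$, so $\lambda^p=E^p$ is a non-zerodivisor in the domain $\cO[X,Y]$. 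This legitimizes the cancellation and completes (2).
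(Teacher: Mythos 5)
Your proof is correct and follows essentially the same route as the paper: part (1) by binomial expansion using $\lambda^{p-1}\mu=p$, and part (2) by the identity $1+\lambda(X\star Y)=(1+\lambda X)(1+\lambda Y)$ together with (1) and cancellation of a power of $\lambda$. The only difference is that the paper simply asserts that $\lambda$ is a nonzerodivisor in $\cO$, whereas you supply the (correct) verification that $E^{p-1}F-p$ is prime in $\ZZ[E,F]$, so $\cO$ is a domain with $\lambda\neq 0$.
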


\begin{proof}
Point (1) follows by expanding $(1+\lambda X)^p$ and using the fact that
$p=\lambda^{p-1}\mu$ in $\cO$. Then we compute:
\begin{align*}
1+\lambda^pP(X+Y+\lambda XY) & =(1+\lambda(X+Y+\lambda XY))^p \\
& =(1+\lambda X)^p(1+\lambda Y)^p \\
& =(1+\lambda^pP(X))(1+\lambda^pP(Y)) \\
&=1+\lambda^p(P(X)+P(Y)+\lambda^pP(X)P(Y)).
\end{align*}
Since $\lambda$ is a nonzerodivisor in $\cO$, point (2) follows.
\end{proof}

\begin{definition}
We denote by $H^M_{\lambda,\mu}$ the kernel of $\varphi$, and call it the
{\em congruence group scheme} associated to $(M,\lambda,\mu)$.
\end{definition}

This construction is functorial in $(M,\lambda,\mu)$. Precisely, consider
two triples $(M,\lambda,\mu)$ and $(M',\lambda',\mu')$ and a morphism
of invertible sheaves $f:M\to M'$ viewed as a section of $M^{-1}\otimes M'$
such that $f\otimes \lambda'=\lambda$ and $f^{\otimes (p-1)}\otimes \mu=\mu'$.
Then we have morphisms $f:\sG^{(\lambda)}\to\sG^{(\lambda')}$
and $f^{\otimes p}:\sG^{(\lambda^{\otimes p})}\to\sG^{(\lambda'^{\otimes p})}$
compatible with the isogenies $\varphi$ and $\varphi'$, and $f$
induces a morphism $H^M_{\lambda,\mu}\to H^{M'}_{\lambda',\mu'}$.
Note also that the image of $H^M_{\lambda,\mu}$ under
$1+\lambda:\sG^{(\lambda)}\to\GG_{m,S}$ factors through $\mu_{p,S}$,
so that by construction $H^M_{\lambda,\mu}$ comes embedded into a diagram
$$
\xymatrix{
0 \ar[r] & H^M_{\lambda,\mu} \ar[r] \ar[d]^\kappa & \sG^{(\lambda)}
\ar[r] \ar[d]^{1+\lambda}
& \sG^{(\lambda^{\otimes p})} \ar[d]^{1+\lambda^{\otimes p}} \ar[r] & 0 \\
0 \ar[r] & \mu_{p,S} \ar[r] & \GG_{m,S} \ar[r] & \GG_{m,S}
\ar[r] & 0 . }
$$
The formation of this diagram is also functorial.

\begin{lemma}
The morphism $\kappa:H^M_{\lambda,\mu}\to\mu_{p,S}$ is a cogenerator.
\end{lemma}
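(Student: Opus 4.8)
The plan is to reduce the assertion to a universal situation over an integral base, where $H^M_{\lambda,\mu}$ becomes $\mu_p$ on a schematically dense open, and then to invoke the fact that the cogenerator condition is closed.

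Since the construction of the diagram, and in particular of $\kappa$, is functorial and compatible with base change, and since being a cogenerator is itself stable under base change (both Cartier duality and the full-set-of-sections condition commute with base change), I would first note that the claim is local on $S$; hence I may assume $M=\cO_S$ is trivial, so that the data amounts to a pair $\lambda,\mu\in\Gamma(S,\cO_S)$ with $\lambda^{\otimes(p-1)}\otimes\mu=p\,1_{\cO_S}$. Such a pair is classified by a morphism $S\to\Spec\cO$, where $\cO=\ZZ[E,F]/(E^{p-1}F-p)$ is the universal ring of Lemma~\ref{lemma_universal_case} (sending $E\mapsto\lambda$, $F\mapsto\mu$), along which $(H^M_{\lambda,\mu},\kappa)$ is pulled back. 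It therefore suffices to prove that $\kappa$ is a cogenerator in the universal case $S=\Spec\cO$.

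By definition, $\kappa$ is a cogenerator precisely on the locus where its Cartier dual $\kappa^\vee\colon(\ZZ/p\ZZ)_\cO=\mu_p^\vee\to(H^M_{\lambda,\mu})^\vee$ is a generator, i.e. where the $p$ associated sections form a full set of sections; by \cite[Corollary~1.3.5]{KM} this locus is closed in $\Spec\cO$. Thus it is enough to check the property over a schematically dense open. Now $\lambda=E$ is a nonzerodivisor in the domain $\cO$ (as already used in the proof of Lemma~\ref{lemma_universal_case}), so $D(\lambda)\subset\Spec\cO$ is schematically dense. Over $D(\lambda)$ the map $u\mapsto 1+\lambda\otimes u$ is an isomorphism $\sG^{(\lambda)}\xrightarrow{\sim}\GG_{m}$, and likewise $1+\lambda^{\otimes p}$; under these identifications $\varphi$ becomes the $p$-th power map, so $H^M_{\lambda,\mu}=\ker\varphi$ is identified with $\mu_p=\ker(\GG_m\xrightarrow{p}\GG_m)$ and $\kappa$ becomes the identity of $\mu_p$. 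The identity of $\mu_p$ is a cogenerator, since its Cartier dual is the identity of $(\ZZ/p\ZZ)$, whose tautological sections $0,1,\dots,p-1$ form a full set of sections. By closedness of the cogenerator locus, $\kappa$ is a cogenerator over all of $\Spec\cO$, and hence over any $S$.

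The delicate point is the reduction itself: on an arbitrary base the open $D(\lambda)$ where $\lambda$ is invertible may be empty (for instance in characteristic $p$), so one cannot argue density directly on $S$ and must genuinely descend to, and then ascend from, the universal integral base $\Spec\cO$. This forces one to verify that both the formation of $\kappa$ and the property of being a cogenerator are compatible with the base change $S\to\Spec\cO$, which in turn rests on the compatibility of Cartier duality and of the full-set-of-sections condition with base change.
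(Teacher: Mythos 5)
Your proof is correct and follows essentially the same route as the paper's: localize to trivialize $M$, reduce to the universal base $\Spec\cO=\Spec\ZZ[E,F]/(E^{p-1}F-p)$, and exploit that over the schematically dense open $D(\lambda)$ the map $\kappa$ becomes an isomorphism onto $\mu_p$, hence a cogenerator. The only cosmetic difference is how the density argument is closed out: you invoke the closedness of the full-set-of-sections locus from \cite[Corollary~1.3.5]{KM}, while the paper phrases the same step as extending the defining equalities of norms from the schematically dense open to all of $\Spec\cO$.
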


\begin{proof}
We have to show that the dual map $(\ZZ/p\ZZ)_S\to (H^M_{\lambda,\mu})^\vee$
is a generator. This means verifying locally on $S$ certain equalities
of norms. Hence we may assume that $S$ is affine and that $M$ is trivial,
then reduce to the universal case where $S$ is the spectrum of the
ring $\cO$ with elements $\lambda,\mu$ satisfying $\lambda^{p-1}\mu=p$ as in
lemma~\ref{lemma_universal_case}, and finally restrict to the schematically
dense open subscheme $S'=D(\lambda)\subset S$. Since
$\sG^{(\lambda)}\times_S S'\to\GG_{m,S'}$ is an isomorphism, then
$H^M_{\lambda,\mu}\times_S S'\to\mu_{p,S'}$ and the dual morphism also are.
The claim follows immediately.
\end{proof}

\subsection{Equivalence between $TGC$ and $TCG\otimes_\ZZ \Lambda$}

The results of the previous subsection imply that for a $\Lambda$-scheme $S$,
a triple $(M,\lambda,\mu)\in TCG(S)$ gives rise in a functorial way to a
finite locally free group scheme with cogenerator
$\kappa:H^M_{\lambda,\mu}\to\mu_{p,S}$, that is, an object of $TGC(S)$.

\begin{theorem} \label{TCG_equiv_TGC}
The functor
$$
F:TCG\otimes_\ZZ \Lambda\to TGC
$$
defined above is an equivalence of fibered categories over $\Lambda$.
If $(M,\lambda,\mu)$ has image 
$(L,a,v)$ then $H^M_{\lambda,\mu}\simeq G^L_{a,v^{\otimes (p-1)}}$.
\end{theorem}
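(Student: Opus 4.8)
The plan is to prove the asserted isomorphism by computing $F$ on objects, and then to deduce essential surjectivity and full faithfulness from that computation. Since $F$ is visibly compatible with arbitrary base change and is defined by data local on $S$, I may work Zariski-locally and assume $S=\Spec R$ with $M$ (hence also the ambient line bundle $L$ on the $TGC$ side) trivial. The first point to settle is that $F$ does not move the line bundle. A generator of a Tate--Oort group $G^L_{a,b}$ is a section of $L$ (by the Examples following the definition of $TG$), whereas a point of $H^M_{\lambda,\mu}\subset \sG^{(\lambda)}$ is by construction a section of $M$; identifying a homomorphism $(\ZZ/p\ZZ)_S\to H^M_{\lambda,\mu}$ with the corresponding point therefore forces $L=M$. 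This matches the degrees in the asserted formula, since $a$ and $\mu$ both lie in $\Gamma(M^{\otimes(p-1)})$ while $v$ and $\lambda$ both lie in $\Gamma(M^{\otimes -1})$. It then remains to identify the two sections $a$ and $v$ that $F$ attaches to $H^M_{\lambda,\mu}$ equipped with its cogenerator $\kappa=1+\lambda$.

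The heart of the matter is this object-level computation, and I would carry it out exactly as in the proof that $\kappa$ is a cogenerator: reduce to the universal case of Lemma~\ref{lemma_universal_case}, with $R=\cO\otimes_\ZZ\Lambda$, $\lambda=E$, $\mu=F$ and $M=\cO$, and then restrict to the schematically dense open $D(\lambda)=D(E)$ where $\lambda$ is invertible. Over $D(E)$ the map $1+\lambda\colon\sG^{(\lambda)}\to\GG_{m,S}$ is an isomorphism and $\kappa$ identifies $H^M_{\lambda,\mu}$ with the standard $\mu_p=G^{\cO}_{w_p,1}$. Comparing the tautological coordinate $u$ on $\sG^{(\lambda)}$ with the Tate--Oort weight-one coordinate of $\mu_p$ over $D(E)$ pins down the Tate--Oort triple of $H^M_{\lambda,\mu}$: one obtains universal units $\alpha,\beta\in\Lambda^\times$, depending only on $p$ and fixed by Tate--Oort's normalisation of $\mu_p$, with $a=\beta\mu$ and $v=\alpha\lambda$ (so $v^{\otimes(p-1)}=\alpha^{p-1}\lambda^{\otimes(p-1)}$, recovering $b$ via Lemma~\ref{gen_cogen}). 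Since $E$ is a nonzerodivisor in $\cO$, these identities proved over $D(E)$ extend over all of $\Spec\cO$, and then by functoriality to an arbitrary triple. The relations are automatically compatible, as $a\otimes v^{\otimes(p-1)}=\beta\alpha^{p-1}(\lambda^{\otimes(p-1)}\otimes\mu)$ equals $w_p$ precisely because $\beta\alpha^{p-1}=w_p/p$ and $\lambda^{\otimes(p-1)}\otimes\mu=p$. This yields the displayed isomorphism $H^M_{\lambda,\mu}\simeq G^L_{a,v^{\otimes(p-1)}}$; essential surjectivity is then immediate, since for $(L,a,v)\in TGC$ the triple $(M,\lambda,\mu):=(L,\alpha^{-1}v,\beta^{-1}a)$ lies in $TCG\otimes_\ZZ\Lambda$ and satisfies $F(M,\lambda,\mu)=(L,a,v)$.

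Finally I would check full faithfulness by matching the morphism conditions under $L=M$, $a=\beta\mu$, $v=\alpha\lambda$. A $TCG$-morphism $f\colon M\to M'$ satisfies $f\otimes\lambda'=\lambda$ and $f^{\otimes(p-1)}\otimes\mu=\mu'$, while a $TGC$-morphism satisfies $v'\otimes f=v$ and $a\otimes f^{\otimes p}=f\otimes a'$. The first pair is literally equivalent after dividing by the unit $\alpha$; and the $TCG$-condition $f^{\otimes(p-1)}\mu=\mu'$ yields the $TGC$-condition $\mu\otimes f^{\otimes p}=f\otimes\mu'$ upon multiplying by $f$. The converse implication is the only genuinely delicate point, because $f$ need not be cancellable, and I expect it to be the main obstacle. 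I would dispatch it by the same device: pass to the universal ring carrying $(M,\lambda,\mu,M',\lambda',\mu',f)$ with its relations, where $\lambda'$ is a nonzerodivisor; there the identity $\lambda'^{\otimes(p-1)}\otimes(f^{\otimes(p-1)}\mu-\mu')=0$, which follows from $\lambda^{\otimes(p-1)}\mu=p=\lambda'^{\otimes(p-1)}\mu'$ together with $f\otimes\lambda'=\lambda$, forces $f^{\otimes(p-1)}\mu=\mu'$, and the general case follows by specialisation. Once this cancellation is settled, $F$ is fully faithful, and together with essential surjectivity this proves that it is an equivalence of fibered categories over $\Lambda$.
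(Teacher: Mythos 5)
Your overall picture of what $F$ does to objects is correct: under a canonical identification of line bundles the paper shows $F(M,\lambda,\mu)=(M,w_{p-1}\mu,\lambda)$, so your units are $\alpha=1$ and $\beta=w_{p-1}$, and granted that formula your essential-surjectivity argument is fine. The genuine gap is the step you dismiss in one sentence, namely that ``$F$ does not move the line bundle.'' In the Tate--Oort dictionary the invertible sheaf $L^{-1}$ attached to a group scheme $G$ is not ``the line bundle in which the points live''; it is, by construction, the subsheaf $I_1$ of $\chi$-eigensections of the augmentation ideal. For $G=H^M_{\lambda,\mu}$ the subsheaf $M^{-1}\subset\cO_G$ is \emph{not} $I_1$: the $\FF_p^\times$-action comes from the group law $u_1\star u_2=u_1+u_2+\lambda u_1 u_2$, which is not linear in the coordinate (e.g.\ $[2]^*x=2x+\lambda_0x^2$), so $M^{-1}$ is not an eigenspace, and an abstract identification of functors of points does not produce a linear isomorphism $M\simeq L$. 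What must be proved is that the eigenprojection $t=(p-1)e_1(-x)$ of a local generator $x$ of $M^{-1}$ still generates $I_1$, i.e.\ that $x\mapsto t$ is an isomorphism $M^{-1}\to I_1$; this is the heart of the paper's proof, established by the identity $(\star)$ (identity (16) of \cite{Tate-Oort}), which expresses $x$ back as a universal polynomial in $t$, combined with a fibrewise generation argument. Your substitute --- compute over $D(\lambda)$ in the universal case and extend by schematic density --- cannot close this gap: density does propagate equalities of globally defined sections (the paper uses it for exactly that), but the property of a map of invertible sheaves being an isomorphism does \emph{not} pass from a dense open to the whole base (multiplication by $\lambda$ on $\cO$ is the standard counterexample), and the closed locus $\lambda=\mu=0$, where $H^M_{\lambda,\mu}$ has $\alpha_p$-fibres, is precisely where your argument says nothing. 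Without this step, neither your object formula nor your essential surjectivity is grounded.

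The second problem is your ``delicate point'' in full faithfulness: it is not merely delicate, it is false as stated, so no universal-ring argument can prove it. In the ring $A=\Lambda[\lambda,\mu,\lambda',\mu',f]$ modulo the relations $\lambda^{p-1}\mu=p$, $\lambda'^{p-1}\mu'=p$, $f\lambda'=\lambda$, $\mu f^{p}=f\mu'$, the element $\lambda'$ is a \emph{zerodivisor}, contrary to your assumption. Concretely, over $R=\FF_p[\epsilon]/(\epsilon^p)$ take $(M,\lambda,\mu)=(R,0,1)$, $(M',\lambda',\mu')=(R,0,0)$ and $f=\epsilon$: both $TGC$-morphism conditions hold (each side of each equation is $0$, since $\epsilon^p=0$), yet $f^{\otimes(p-1)}\otimes\mu=\epsilon^{p-1}\neq 0=\mu'$. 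Group-theoretically, $H^R_{0,1}$ is a form of $\ZZ/p\ZZ$ and $H^R_{0,0}=\alpha_p$, both with trivial cogenerator, and $u\mapsto\epsilon u$ is a nonzero homomorphism over this non-reduced base commuting with the cogenerators, but it comes from no $TCG$-morphism. So the cancellation you need is available exactly when $f$ is invertible --- in which case it is trivial and needs no universal ring --- and fails for the not-necessarily-invertible fibre morphisms as the paper literally defines them. (This points to a defect in the paper's own definitions, which the paper hides by omitting the verification of full faithfulness; the discrepancy disappears if one treats $TCG$ and $TGC$ as fibered in groupoids, or replaces the $TCG$-morphism condition $f^{\otimes(p-1)}\otimes\mu=\mu'$ by $\mu\otimes f^{\otimes p}=f\otimes\mu'$.) Either way, your nonzerodivisor-and-specialize argument does not work as written.
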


\begin{proof}
The main point is to describe $F$ in detail using the Tate-Oort
classification, and to see that it is essentially surjective. The
description of the action of $F$ on morphisms and the verification
that it is fully faithful offers no difficulty and will be omitted.

Let $(M,\lambda,\mu)$ be a triple in $TCG(S)$ and let $G=H^M_{\lambda,\mu}$.
We use the notations of section 2 of \cite{Tate-Oort}, in particular
the structure of the group $\mu_p$ is described by a function $z$,
the sheaf of $\chi$-eigensections $J=y\cO_S\subset\cO_{\mu_p}$ with
distinguished generator $y=(p-1)e_1(1-z)$, and constants
$$w_1=1,w_2,\dots,w_{p-1},w_p=pw_{p-1}\in\Lambda.$$
The augmentation ideal of the algebra $\cO_G$ is the sheaf $I$ generated
by $M^{-1}$, and by \cite{Tate-Oort} the
subsheaf of $\chi$-eigensections is the sheaf $I_1=e_1(I)$
where $e_1$ is the $\cO_S$-linear map defined in \cite{Tate-Oort}.
It is an invertible sheaf and $L$ is (by definition) its inverse.

We claim that in fact $I_1=e_1(M^{-1})$. In order to see this, we may
work locally. Let $x$ be a local generator for $M^{-1}$ and let
$$
t:=(p-1)e_1(-x) \in I_1.
$$
Let us write $\lambda=\lambda_0 x$ for some local function $\lambda_0$.
We first prove that
$$
(\star) \quad\quad
x=\frac{1}{1-p}\big(t+\frac{\lambda_0 t^2}{w_2}+\dots
+\frac{\lambda_0^{p-2}t^{p-1}}{w_{p-1}}\big).
$$
In fact, by construction the map $\cO_{\mu_p}\to\cO_G$ is given
by $z=1+\lambda_0 x$, so we get $y=(p-1)e_1(1-z)=\lambda_0 t$.
In order to check the expression for $x$ in terms of $t$, we can
reduce to the universal case (lemma~\ref{lemma_universal_case}).
Then $\lambda_0$ is not a zerodivisor and we can harmlessly multiply
both sides by $\lambda_0$. In this form, the equality to be
proven is nothing else than the identity (16) in \cite{Tate-Oort}.
Now write $t=\alpha t^*$ with $t^*$ a local generator for $I_1$ and
$\alpha$ a local function. Using~($\star$) we find that
$x=\alpha x^*$ for some $x^*\in\cO_G$. Since $x$
generates $M^{-1}$ in the fibres over $S$, this proves that $\alpha$
is invertible. Finally $t$ is a local generator for $I_1$ and this
finishes the proof that $I_1=e_1(M^{-1})$.

Let $x^\vee$ be the local generator for $M$ dual to $x$ and write
$\mu=\mu_0(x^\vee)^{\otimes (p-1)}$ for some local function $\mu_0$
such that $(\lambda_0)^{p-1}\mu_0=p$. Let $t^\vee$ be the local
generator for $L$ dual to $t$. We define a local section $a$ of
$L^{\otimes (p-1)}$ by
$$
a=w_{p-1}\mu_0(t^\vee)^{\otimes (p-1)}
$$
and a local section $v$ of $L^{-1}$ by
$$
v=\lambda_0 t.
$$
These sections are independent of the choice of the local generator
$x$, because if $x'=\alpha x$ then
$$
(x')^\vee=\alpha^{-1}x^\vee
\ ; \
t'=\alpha t
\ ; \
(t')^\vee=\alpha^{-1}t^\vee
\ ; \
\lambda'_0=\alpha^{-1}\lambda_0
\ ; \
\mu'_0=\alpha^{p-1}\mu_0
$$
so that
$$
a'=w_{p-1}\mu'_0(t'^\vee)^{\otimes (p-1)}
=w_{p-1}\alpha^{p-1}\mu_0\alpha^{1-p}(t^\vee)^{\otimes (p-1)}=a
$$
and
$$
v'=\lambda'_0 t'=\alpha^{-1}\lambda_0\alpha t=v.
$$
They glue to global sections $a$ and $v$ satisfying
$$
a\otimes v^{\otimes (p-1)}=w_p1_{\cO_S}.
$$
Let us prove that $a$ and $v$ are indeed the sections defining
$G$ and the cogenerator in the Tate-Oort classification. The
verification for $a$ amounts to checking that the relation
$$
t^p=w_{p-1}\mu_0 t
$$
holds in the algebra $\cO_G$. This may be seen in the universal
case where $\lambda_0$ is not a zerodivisor, hence after multiplying
by $(\lambda_0)^p$ this follows from the equality $y^p=w_py$ from
\cite{Tate-Oort}. The verification for $v$ amounts to noting that
the cogenerator $G\to\mu_{p,S}$ is indeed given by $y\mapsto v$.

This completes the description of $F$ on objects.
Finally we prove that $F$ is essentially surjective. Assume given
$(L,a,v)$ and let $t$ be a local generator for $I_1=L^{-1}$.
Write $a=w_{p-1}\mu_0(t^\vee)^{\otimes (p-1)}$, $v=\lambda_0 t$
and define an element $x\in\cO_G$ by the expresion ($\star$) above.
If we change the generator $t$ to another $t'=\alpha t$, then
$\lambda'_0=\alpha^{-1}\lambda_0$ and $x'=\alpha x$. It follows that
the subsheaf of $\cO_G$ generated by $x$ does not depend on the
choice of the generator for $I_1$, call it $N$. Reducing to the
universal case as before, we prove that $t=(p-1)e_1(-x)$. This
shows that in fact $N$ is an invertible sheaf and we take $M$ to
be its inverse. Finally we define sections $\lambda\in\Gamma(S,M^{-1})$
and $\mu\in\Gamma(S,M^{\otimes (p-1)})$ by the local expressions
$\lambda=\lambda_0 x$ and $\mu=\mu_0(x^\vee)^{\otimes (p-1)}$. It
is verified like in the case of $a,v$ before that they do not
depend on the choice of $t$ and hence are well-defined global
sections. The equality
$\lambda^{\otimes (p-1)}\otimes \mu=p1_{\cO_S}$ holds true and the
proof is now complete.
\end{proof}

\subsection{Proof of theorem~\ref{thm_embedding}}

We are now in a position to prove theorem~\ref{thm_embedding}.
We keep its notations. Since the construction of the isogeny
$\varphi_\kappa$ and the whole commutative diagram is canonical,
if we perform it after fppf base change $S'\to S$ then it will
descend to $S$. We choose $S'=S_1\amalg S_2$ where
$S_1=S\otimes_\ZZ\ZZ[1/p]$ and $S_2=S\otimes_\ZZ\Lambda$.
Over $S_1$ the group scheme $G$ is \'etale and the cogenerator is
an isomorphism by \cite[lemma 1.8.3]{KM}. We take
$\sG=\sG'=\GG_{m,S}$ and $\varphi_\kappa$ is the $p$-th power map.
Over $S_2$ we use theorem~\ref{TCG_equiv_TGC} which provides a
canonical isomorphism between $\kappa$ and $H^M_{\lambda,\mu}$
with its canonical cogenerator, embedded into a diagram of the
desired form. This completes the proof.

%
%
%
%
%
%

\section{Weil restriction of closed subschemes} \label{Sec:Weil}

Let $Z\to X$ be a morphism of $S$-schemes (or algebraic spaces) and
denote by $h:X\to S$ the structure map. The Weil restriction $h_*Z$
of $Z$ along $h$ is the functor on $S$-schemes defined by
$(h_*Z)(T)=Hom_X(X\times_S T,Z)$. It may be seen as a left adjoint
to the pullback along $h$, or as the functor of sections of $Z\to X$.

If $Z\to X$ is a closed immersion of schemes (or algebraic spaces)
of finite presentation over $S$, there are two main cases
where $h_*Z$ is known to be representable by
a closed subscheme of $S$. As is well-known, this has applications
to representability of various equalizers, kernels, centralizers,
normalizers, etc. These two cases are~:

\medskip

(i) if $X\to S$ is proper flat and $Z\to S$ is separated, by
the Grothendieck-Artin theory of the Hilbert scheme,

\medskip

(ii) if $X\to S$ is essentially free, by
\cite[Exp.~VIII, th.~6.4]{sga3}.

\medskip

In this appendix, we want to prove that $h_*Z$ is representable by
a closed subscheme of $S$ in a case that includes both situations
and is often easier to check in practice, namely the case where
$X\to S$ is flat and pure.

\subsection{Essentially free and pure morphisms}
\label{EF_and_pure}

We recall the notions of essentially free and pure morphisms
and check that essentially free morphisms and proper morphisms
are pure.

In \cite[Exp.~VIII, section~6]{sga3},
a morphism $X\to S$ is called
{\em essentially free} if and only if there exists a covering
of $S$ by open affine subschemes $S_i$, and for each $i$ an
affine faithfullay flat morphism $S'_i\to S_i$ and a covering
of $X'_i=X\times_S S'_i$ by open affine subschemes $X'_{i,j}$
such that the function ring of $X'_{i,j}$ is free as a module
over the function ring of $S'_i$.

In fact, the proof of theorem 6.4 in \cite[Exp.~VIII]{sga3}
works just as well with a slightly weaker notion than freeness
of modules. Namely, for a module
$M$ over a ring $A$, let us say that $M$ is {\em good} if the
canonical map $M\to M^{\vee\vee}$ from $M$ to its linear bidual
is injective after any change of base ring $A\to A'$. It is a
simple exercise to see that this is equivalent to $M$ being a
submodule of a product module $A^I$ for some set $I$, over $A$
and after any base change $A\to A'$. For instance, free modules,
projective modules, product modules are good. This gives rise
to a notion of {\em essentially good} morphism, and in particular
{\em essentially projective} morphism. Then inspection
of the proof of theorem 6.4 of \cite[Exp.~VIII]{sga3} shows
that it remains valid for these morphisms.

In \cite[3.3.3]{R-G}, a morphism locally of finite type $X\to S$
is called {\em pure} if and only if for all points $s\in S$,
with henselization $(\tilde S,\tilde s)$, and all points
$\tilde x\in \tilde X$ where $\tilde X=X\times_S \tilde S$,
if $\tilde x$ is an associated point in its fibre then its
closure in $\tilde X$ meets the special fibre. Examples of
pure morphisms include proper morphisms (by the valuative
criterion for properness) and morphisms locally
of finite type and flat, with geometrically irreducible fibres
without embedded components (\cite[3.3.4]{R-G}).

Finally if $X\to S$ is locally of finite presentation and essentially
free, then it is pure. Indeed, with the notations above
for an essentially free morphism, one sees using \cite[3.3.7]{R-G}
that it is enough to see that for each $i,j$ the scheme $X'_{i,j}$
is pure over $S'_i$. But since the function ring of $X'_{i,j}$
is free over the function ring of $S'_i$, this follows from
\cite[3.3.5]{R-G}.

\subsection{Representability of $h_*Z$}

\begin{proposition}
Let $h:X\to S$ be a morphism of finite presentation,
flat and pure, and let $Z\to X$ be a closed immersion. Then
the Weil restriction $h_*Z$ is representable by a closed
subscheme of $S$.
\end{proposition}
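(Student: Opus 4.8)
The plan is to first unwind the functor and then reduce it to a form of the statement that is already available from \cite[Exp.~VIII, th.~6.4]{sga3}. Since $Z\to X$ is a closed immersion, a point of $(h_*Z)(T)=\operatorname{Hom}_X(X\times_S T,Z)$ is a section of the closed immersion $Z\times_S T\hookrightarrow X\times_S T$; such a section is unique when it exists, and it exists precisely when this closed immersion is an isomorphism, that is, when the ideal $\cI\subseteq\cO_X$ of $Z$ satisfies $\cI\cO_{X_T}=0$. Thus $h_*Z$ is a subfunctor of the final object, and representing it by a closed subscheme of $S$ amounts to producing the largest closed subscheme $S_0\hookrightarrow S$ over which $Z$ and $X$ agree, with formation commuting with base change in $S$. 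As the question is Zariski-local on $S$, I would assume $S$ affine from the outset.

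The key reduction is to the \emph{essentially good} case. As recalled above, inspection of the proof of \cite[Exp.~VIII, th.~6.4]{sga3} shows that it remains valid when $X\to S$ is essentially good, and in that case it produces directly a closed subscheme of $S$ representing $h_*Z$ (the faithfully flat localization being internal to the statement). It therefore suffices to prove that a flat, finitely presented and pure morphism is essentially good. Concretely, after replacing $S$ by a faithfully flat affine cover I would exhibit a finite affine open cover $\{\Spec B_j\}$ of $X$ for which each $B_j$ is a good $\cO_S$-module; the condition $\cI\cO_{X_T}=0$ is then the finite conjunction over $j$ of the conditions $\cI B_j\otimes_{\cO_S}\cO_T=0$, and the good-module structure supplies base-change-stable ``coordinates'' that cut out each of these by an ideal of $\cO_S$, exactly as in the free case.

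The main obstacle is precisely this last point: a general flat finitely presented algebra need not be a good module (for instance $\ZZ\to\ZZ[1/2]$ is flat and of finite presentation but is \emph{not} pure, and indeed the corresponding section functor is not a closed subscheme), and it is here that purity must be used. I would deduce from the Raynaud--Gruson theory \cite{R-G} that, for $X\to S$ flat, finitely presented and pure, the function rings of affine opens become, after a faithfully flat base change on $S$, Mittag--Leffler $\cO_S$-modules; being moreover flat they are locally projective, hence good, goodness being stable under arbitrary base change by its very definition. This is the step I expect to require the most care, both in extracting the right consequence of \cite[3.3.3]{R-G} and in verifying that the resulting coordinates, and therefore the closed subscheme they define, are compatible with base change, so that the output genuinely represents $h_*Z$ over every $T$. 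Finally, since proper morphisms and essentially free morphisms are both pure, as recorded above, the proposition recovers the two classical cases (i) and (ii).
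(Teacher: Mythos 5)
Your unwinding of the functor is correct --- a closed immersion admits a section precisely when it is an isomorphism, so $h_*Z$ is the subfunctor of the base cut out by the condition $\cI\cO_{X_T}=0$ --- and your overall plan of reducing to the extended form of \cite[Exp.~VIII, th.~6.4]{sga3} for essentially good morphisms is the paper's strategy too. But the step you yourself flag as the crux has a genuine gap, and it is exactly where the paper's proof turns differently. Your claim that for $X\to S$ flat, finitely presented and pure the function rings of affine opens become Mittag--Leffler after a faithfully flat base change is false: purity is not inherited by affine opens, and no faithfully flat base change can repair this. Indeed, if $B\otimes_{\cO(S)}A$ were Mittag--Leffler for some faithfully flat $\cO(S)\to A$, then being also flat and countably generated it would be projective over $A$ (Raynaud--Gruson), and by Raynaud--Gruson's descent of projectivity along faithfully flat ring maps $B$ itself would be projective over $\cO(S)$. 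Your own non-example now refutes the claim: the identity of $\Spec\ZZ$ is certainly flat, finitely presented and pure, yet its affine open $\Spec\ZZ[1/2]$ (likewise $\AA^1_{\ZZ[1/2]}\subset\AA^1_\ZZ$, an affine open of a free, hence pure, morphism) has function ring which is not projective over $\ZZ$ --- which is why, as you note, its section functor is not a closed subscheme. If instead you only meant that \emph{some} cover of all of $X$ by good affines exists fppf-locally on $S$ (``$X/S$ is essentially good''), that statement is not in \cite{R-G} and you give no argument for it: what \cite[3.3.13]{R-G} provides are free affine neighbourhoods only for points of the \emph{closed} fibre over a henselian local base, and nothing forces such opens, or any good opens, to cover the part of $X$ lying over the other points of the base.

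This is precisely why the paper never claims $X$ itself is essentially good. Its proof covers only the closed fibre $X_s$, \'etale-locally on $S$ near $s$, by affine opens $U_i$ which after shrinking are pure, hence projective, over the neighbourhood (\cite[3.3.8]{R-G} and \cite[3.3.5]{R-G}); the SGA3 theorem is applied to the open subscheme $U=U_1\cup\dots\cup U_n$, which in general is strictly smaller than $X$. Purity of $X/S$ is then used a second time, via \cite[3.1.7]{Romagny}, to show that $U\subset X$ is $S$-universally schematically dense, and this is what makes the restriction map $h_*Z\to k_*(Z\cap U)$ an isomorphism: an ideal of $\cO_{X_T}$ vanishing on the schematically dense open $U_T$ is zero, so representing $k_*(Z\cap U)$ does represent $h_*Z$. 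Both ingredients --- the fibrewise (not global) covering statement and the universal schematic density argument --- are missing from your proposal; without them, representability of the Weil restriction along a union of good affines says nothing about $h_*Z$, and repairing your argument essentially forces you back to the paper's proof.
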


\begin{proof}
The question is local for the {\'e}tale topology on $S$. Let $s\in S$
be a point and let $\cO^h$ be the henselization of the local
ring at $s$. By \cite[3.3.13]{R-G}, for each $x\in X$ lying over
$s$, there exists an open affine subscheme $U_x^h$ of
$X\times_S\Spec(\cO^h)$ containing $x$ and whose function
ring is free as an $\cO^h$-module. Since $X_s$ is quasi-compact,
there is a finite
number of points $x_1,\dots,x_n$ such that the open affines
$U_i^h=U_{x_i}^h$ cover it. Since $X$ is locally of
finite presentation, after restricting to an {\'e}tale neighbourhood
$S'\to S$ of $s$, there exist affine open subschemes $U_i$ of $X$
inducing the $U_i^h$. According to \cite[3.3.8]{R-G}, the
locus of the base scheme $S$ where $U_i\to S$ is pure is
open, so after shrinking $S$ we may assume that for each $i$ the
affine $U_i$ is flat and pure. This means that its function ring
is projective by \cite[3.3.5]{R-G}. In other words, the union
$U=U_1\cup\dots\cup U_n$ is essentially projective over $S$ in
the terms of the comments in \ref{EF_and_pure}. If $k:U\to X$
denotes the structure map, it follows from theorem~6.4 of
\cite[Exp. VIII]{sga3} that $k_*(Z\cap U)$ is representable
by a closed subscheme of $S$. On the other hand, according to
\cite[3.1.7]{Romagny}, replacing $S$ again by a smaller
neighbourhood of $s$, the open immersion $U\to X$ is
$S$-universally schematically dense. One deduces immediately
that the natural morphism $h_*Z\to k_*(Z\cap U)$ is an isomorphism.
This finishes the proof.
\end{proof}

This proposition has a long list of corollaries and applications
listed in \cite[Exp.~VIII, section~6]{sga3}. In particular let
us mention the following~:

\begin{corollary} \label{equalizer}
Let $X\to S$ be a morphism of finite presentation, flat and pure
and $Y\to S$ a separated morphism. Consider two morphisms
$f,g:X\to Y$. Then the condition $f=g$ is represented by a closed
subscheme of $S$.
\end{corollary}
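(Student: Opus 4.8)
The plan is to reduce the statement to the representability of the Weil restriction established in the preceding Proposition. First I would carve out the closed subscheme of $X$ along which $f$ and $g$ agree. Since $Y\to S$ is separated, the diagonal $\Delta_{Y/S}\colon Y\into Y\times_S Y$ is a closed immersion. Writing $(f,g)\colon X\to Y\times_S Y$ for the morphism with components $f$ and $g$, I set
$$
Z := (f,g)^{-1}(\Delta_{Y/S}) = X\times_{Y\times_S Y} Y.
$$
As the pullback of a closed immersion, $Z\into X$ is again a closed immersion, and by construction $Z$ is the scheme-theoretic locus in $X$ where $f$ and $g$ coincide.

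Next I would identify the functor ``$f=g$'' with the Weil restriction $h_*Z$ along $h\colon X\to S$. For an $S$-scheme $T$, the base changes $f_T,g_T\colon X_T\to Y_T$ coincide if and only if $(f_T,g_T)\colon X_T\to (Y\times_S Y)_T$ factors through the diagonal, which by the universal property of the fibre product defining $Z$ is exactly the condition that the projection $X_T\to X$ factor through the closed subscheme $Z$. Since $Z\into X$ is a closed immersion, an $X$-morphism $X_T\to Z$ exists precisely when $X_T\to X$ factors through $Z$, and when it exists it is unique. Hence $(h_*Z)(T)=Hom_X(X_T,Z)$ is a singleton when $f_T=g_T$ and is empty otherwise; that is, $h_*Z$ is precisely the subfunctor of the final $S$-functor representing the condition $f=g$.

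Finally I would invoke the Proposition. Its hypotheses are met, since $h\colon X\to S$ is of finite presentation, flat and pure by assumption, and $Z\into X$ is a closed immersion. Therefore $h_*Z$ is representable by a closed subscheme $S_0\into S$, and by the identification above this $S_0$ represents the condition $f=g$.

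The substance of the argument sits entirely in the Proposition; the only points requiring care here are the two translations, namely that separatedness of $Y/S$ makes $Z$ a genuine closed subscheme, and that the equalizer subfunctor really \emph{equals} $h_*Z$ rather than merely mapping to it. I expect the main (and modest) subtlety to be verifying this functorial identification compatibly with base change, i.e.\ that $X_T\to X$ factors through $Z$ exactly when $f_T=g_T$; once that is in hand, the conclusion is immediate.
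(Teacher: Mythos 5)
Your proposal is correct and is exactly the paper's argument: the paper's one-line proof applies the preceding Proposition to the pullback of the diagonal of $Y$ along $(f,g)\colon X\to Y\times_S Y$, which is precisely your $Z$, and your verification that the equalizer functor coincides with $h_*Z$ is the routine unwinding the paper leaves implicit.
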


\begin{proof}
Apply the previous proposition to the pullback of the diagonal
of $Y$ along $(f,g):X\to Y\times_S Y$.
\end{proof}




\end{document}